\newtheorem{theorem}{Theorem}
\newtheorem{lemma}[theorem]{Lemma}
\newtheorem{corollary}[theorem]{Corollary}
\newtheorem{conjecture}{Conjecture}
\newtheorem{question}[conjecture]{Question}
\theoremstyle{definition}
\theoremstyle{remark}
\newtheorem{remark}{Remark}
\newtheorem{example}{Example}
\newcommand{\mythmname}{}
\newtheoremstyle{mytheorem}
	{5pt}
	{5pt}
	{\it}
	{}
	{}
	{{\bf .}}
	{.5em}
	{\mythmname{\ifthenelse{ \equal{#3}{} }{}{\ (\thmnote{#3})}}}
\theoremstyle{mytheorem}
\newtheorem{namedtheorem}{Name}
\newcommand{\renametheorem}[1]
{
	\renewcommand{\mythmname}{{\bf #1}}
}
\newcommand{\G}{\mathcal{G}}
\newcommand{\A}{\mathcal{A}}
\newcommand{\B}{\mathcal{B}}
\newcommand{\IC}{\mathbb{C}}
\newcommand{\IK}{\mathbb{K}}
\newcommand{\IN}{\mathbb{N}}
\newcommand{\IQ}{\mathbb{Q}}
\newcommand{\IZ}{\mathbb{Z}}
\DeclareMathOperator{\Spec}{Spec}
\title{An algorithmic approach to the Polydegree Conjecture for plane polynomial automorphisms}
\author{Drew Lewis \thanks{Department of Mathematics and Statistics, University of South Alabama. \texttt{drewlewis@southalabama.edu}.}
\and Kaitlyn Perry \thanks{Department of Mathematics, Wingate University. \texttt{k.perry@wingate.edu}.}
\and Armin Straub \thanks{Department of Mathematics and Statistics, University of South Alabama. \texttt{straub@southalabama.edu}.}
}
\begin{document}
\maketitle

\begin{abstract}
We study the interaction between two structures on the group of polynomial automorphisms of the affine plane: its structure as an amalgamated free product and as an infinite-dimensional algebraic variety.  We introduce a new conjecture, and show how it implies the Polydegree Conjecture.  As the new conjecture is an ideal membership question, this shows that the Polydegree Conjecture is algorithmically decidable.  We further describe how this approach provides a unified and shorter method of recovering existing results of Edo and Furter.  
\end{abstract}

\section{Introduction}

Let $R$ be a commutative, unital ring, and let $R^{[n]}$ denote the $n$-variable polynomial ring over $R$.  We let $\G(R)$ denote the group of automorphisms of $\Spec R^{[2]}$ over $\Spec R$; in particular, when $R=\IC$, this is the group of polynomial automorphisms of the affine plane.  A fundamental area of inquiry in affine algebraic geometry is to try and understand various structures of this group. 

It is well known that, for any field $\IK$, $\G(\IK)$ has the structure of an amalgamated free product.  Let $\A(\IK)$ denote the affine subgroup, consisting of all degree one automorphisms, and let $\B(\IK)$ denote the (upper) triangular subgroup, consisting of all automorphisms of the form  $\left(aX+P(Y),bY+c\right)$
for some $a,b \in \IK^*$, $c \in \IK$, and $P(Y) \in \IK[Y]$.  The classical Jung-van der Kulk Theorem states that $\G(\IK)$ is the amalgamated free product of $\A(\IK)$ and $\B(\IK)$ over their intersection; thus,  for any $\theta \in \G(\IK)$, we can write $$\theta = \alpha _0 \tau _1 \alpha _1 \cdots \tau _k \alpha _k$$ for some $\alpha _i \in \A(\IK)$ and $\tau _i \in \B(\IK) \setminus \A(\IK)$.  While this factorization is not unique, the length $k$ and the degrees of the triangular automorphisms are; this allows us to define the {\em polydegree} of $\theta$ as the sequence $(\deg \tau _1, \ldots, \deg \tau _k)$.  We let $\G_{(d_1,\ldots,d_k)}(\IK)$ denote the subset of automorphisms with polydegree $(d_1,\ldots,d_k)$; in the case of $\IK=\IC$, we write $\G=\G(\IC)$ and $\G_{(d_1,\ldots,d_k)}=\G_{(d_1,\ldots,d_k)}(\IC)$ for conciseness.  

$\G=\G(\IC)$ also has the structure of an infinite dimensional algebraic variety (ind-variety), as it is a locally closed subset of $(\IC[X,Y])^2$ (see \cite{Shafarevich81} or \cite{BCW} for details).  The problem of trying to understand this structure has seen renewed interest recently (see \cite{Blanc2016,Edo2018,EdoPoloni,Furter15,FurterPoloni}, among many others, as well as \cite{FurterKraft} for a very recent and comprehensive survey).  In this paper, we are interested in the more focused question of how the ind-variety structure interacts with the amalgamated free product structure.  
The general question is the following, where for any $A \subset \G$, $\bar{A}$ denotes the closure of $A$ in the Zariski topology.

\begin{question}
What can be said about the varieties $\overline{\G_{(d_1,\ldots,d_k)}}$?
\end{question}

Or, more specifically:

\begin{question} \label{q2}
What conditions on the sequences $(d_1,\ldots,d_k)$ and $(e_1,\ldots,e_l)$ guarantee $\G_{(d_1,\ldots,d_k)} \subset \overline{\G_{(e_1,\ldots,e_l)}}$?
\end{question}

The first result applicable to this question is due to Friedland and Milnor \cite{FriedlandMilnor}, who (using topological methods) showed that $\G_{(d_1,\ldots,d_k)}$ is a constructible subset of $\G$ of dimension $d_1+\cdots+d_k+6$, which implies that, for distinct degree sequences, $d_1+\cdots+d_k < e_1+\cdots+e_l$ is a necessary condition for Question \ref{q2}. 
Furter \cite{Furter02} showed that we must also have $k \leq l$, and later \cite{Furter09} that, when the polydegrees have the same length (i.e. $k=l$), the containment holds if and only if $d_i \leq e_i$ for each $1 \leq i \leq k$.

In the length two case ($l=2$), this means $k=1$ is the only unsettled situation; in this case, it appears that the topological constraint is sufficient.  In fact, the most optimistic possibility seems to be true; this assertion is known as the Polydegree Conjecture.

\renametheorem{Polydegree Conjecture}
\begin{namedtheorem} Let $d,e >2$.
$$\overline{\G_{(d,e)}} = \coprod _{(d',e')<(d,e)} \G_{(d',e')} \cup \coprod _{f<d+e} \G_{(f)}$$
\end{namedtheorem}

In length three ($l=3$), the situation is much more delicate, and it is known that the corresponding conjecture must be false; see \cite{SFPA1, SFPA3} for some results there.  We conjecture more generally

\begin{conjecture}\label{c3}
Let $k<l$, and let $(d_1,\ldots,d_k)$ and $(e_1,\ldots,e_l)$  be two degree sequences with $d_1+\cdots+d_k < e_1+\cdots+e_l$.  Then $\G_{(d_1,\ldots,d_k)} \subset \overline{\G_{(e_1,\ldots,e_l)}}$.
\end{conjecture}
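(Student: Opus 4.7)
The natural strategy is induction on $l - k \geq 1$, with the Polydegree Conjecture (equivalently, the new conjecture of this paper) and Furter's equal-length result \cite{Furter09} as the two key ingredients. The case $l - k = 0$ is excluded by the hypothesis $k < l$, so the induction starts at $l - k = 1$.

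Consider first the \emph{adjacent collapse} operation: for any index $i$ and any $f < e_i + e_{i+1}$, one expects
$$\G_{(e_1, \ldots, e_{i-1}, f, e_{i+2}, \ldots, e_l)} \subset \overline{\G_{(e_1, \ldots, e_l)}}.$$
To justify this, factor a given element of the left-hand side as $\eta \sigma \eta'$ with $\sigma$ of polydegree $(f)$ and $\eta, \eta'$ capturing the remaining factors. By the Polydegree Conjecture, $\sigma$ is a limit of elements $\sigma_t \in \G_{(e_i, e_{i+1})}$; since composition is a morphism of ind-varieties and $\eta, \eta'$ may be taken in sufficiently generic position (to avoid degree cancellation between their affine parts and the endpoints of $\sigma_t$), the compositions $\eta \sigma_t \eta'$ lie in $\G_{(e_1, \ldots, e_l)}$, exhibiting the desired limit.

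The inductive step $l - k \geq 2$ is then easy: collapse with $f = e_i + e_{i+1} - 1$ for any $i$ to produce a length-$(l-1)$ polydegree in $\overline{\G_{(e_1, \ldots, e_l)}}$ whose sum still exceeds $d_1 + \cdots + d_k$, then invoke the induction hypothesis at $(l-1) - k < l - k$ and transitivity of closure containment.

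The main obstacle lies in the base case $l - k = 1$: by Furter's result it would suffice to produce a length-$(l-1)$ polydegree $(e'_1, \ldots, e'_{l-1})$ in $\overline{\G_{(e_1, \ldots, e_l)}}$ with $d_i \leq e'_i$ componentwise, but adjacent collapsing alone is not always enough. Already for $(d_1, d_2) = (4, 4)$ and $(e_1, e_2, e_3) = (3, 3, 3)$, the only length-2 polydegrees produced by a single adjacent collapse are $(f, 3)$ or $(3, f)$ with $f \leq 5$, none dominating $(4, 4)$. Settling Conjecture \ref{c3} in this regime will require either genuinely new degeneration families (a "simultaneous collapse" involving more than two consecutive positions, or a degeneration that modifies nonadjacent components) or a direct attack bypassing the reduction to length two altogether. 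In the spirit of the present paper, the most natural route is to generalize the new ideal-membership conjecture introduced here to a statement characterizing $\G_{(d_1, \ldots, d_k)} \subset \overline{\G_{(e_1, \ldots, e_l)}}$ as the membership of an explicit polynomial in an explicit ideal, thereby reducing Conjecture \ref{c3} to an algorithmically checkable algebraic condition.
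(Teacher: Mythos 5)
This statement is, in the paper, a conjecture that the authors introduce and explicitly leave open; the entire paper is devoted to partial progress on the $l=2$ case (the Polydegree Conjecture), and Conjecture~\ref{c3} is stated only as a belief about the general pattern. There is therefore no proof in the paper against which to compare your proposal.

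To your credit, you are honest that your proposal is not a proof: you correctly locate the obstruction in the base case $l-k=1$, and your example $(d_1,d_2)=(4,4)$ versus $(e_1,e_2,e_3)=(3,3,3)$ does indeed show that a single adjacent collapse followed by Furter's equal-length criterion cannot succeed. Two further cautions on the ingredients you do invoke. First, the paper's Polydegree Ideal Conjecture $PIC(d,e)$ is not proved to be \emph{equivalent} to the Polydegree Conjecture; Theorem~\ref{thm:ICDC} gives only one implication, and the converse is not claimed. Second, the adjacent-collapse step is not rigorous as written: given a fixed $\theta=\eta\sigma\eta'$ in the collapsed stratum and a limiting family $\sigma_t\in\G_{(e_i,e_{i+1})}$ with $\sigma_t\to\sigma$, whether $\eta\sigma_t\eta'$ lies in $\G_{(e_1,\ldots,e_l)}$ is a condition on the boundary affine factors of the chosen family $\sigma_t$ against the fixed $\eta,\eta'$, and $\eta,\eta'$ cannot be perturbed freely without changing $\theta$. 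One could try to argue that such a well-positioned family exists, or that the bad locus of $\theta$ is nowhere dense and then pass to the closure, but either route requires an argument rather than an appeal to "generic position." None of this affects your conclusion, since the base case already fails by your own example; the conjecture remains genuinely open, and your final suggestion, to seek an ideal-membership reformulation in the spirit of $PIC(d,e)$ for longer polydegrees, is the natural next step.
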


In this paper, we restrict our attention to the length two case, in which there are two results of note:

\begin{theorem}[Edo \cite{SFPA2}]
Let $d,e \in \IN$.  If $d|e$ or $e|d$, then $\mathcal{G}_{(d+e+1)} \subset \overline{\mathcal{G}_{(d+1,e+1)}}$.
\end{theorem}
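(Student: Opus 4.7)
The plan is to exhibit, for a dense subset of targets $\tau^* \in \G_{(d+e+1)}$, an explicit one-parameter family $\theta_t \in \G_{(d+1,e+1)}$ converging to $\tau^*$ as $t \to 0$. The divisibility hypothesis $d \mid e$ (or symmetrically $e \mid d$) will enable the cancellations that force the composite length to collapse from two to one while the degree rises to exactly $d+e+1$ rather than the naive $\max(d+1, e+1)$.

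I would first reduce using the $\A \times \A$-stability of both $\G_{(d+e+1)}$ and $\overline{\G_{(d+1,e+1)}}$: every element of $\G_{(d+e+1)}$ has the form $\alpha_0 \cdot (X + Q(Y), Y) \cdot \alpha_1$ for some affines $\alpha_i$ and some $Q \in \IC[Y]$ of degree $d+e+1$, since affine factors can be absorbed into the outer affines of any reduced amalgam factorization. Hence it suffices to show that each such $(X + Q(Y), Y)$ lies in $\overline{\G_{(d+1,e+1)}}$.

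Assuming WLOG $d \mid e$ and writing $e = dm$, the degenerating family would have the form $\theta_t = \alpha_0(t) \circ \tau_1(t) \circ \alpha(t) \circ \tau_2(t) \circ \alpha_2(t)$ with $\tau_1(t), \tau_2(t)$ triangular of degrees $d+1, e+1$ and $\alpha(t) \in \A \setminus \B$ for $t \neq 0$, degenerating to a triangular affine at $t = 0$. A natural choice is $\alpha(t) = (X, Y + tX)$, so that at $t = 0$ the length drops. To make the limiting degree exactly $d+e+1$ rather than $e+1$, the coefficients of $\tau_1(t), \tau_2(t)$ and the outer affines must be scaled in balanced fractional powers of $t$, so that specific binomial cross-terms in the expansion of $\tau_1(t) \circ \alpha(t) \circ \tau_2(t)$ contribute a finite $Y^{d+e+1}$-coefficient even as the other ``blowing up'' contributions cancel. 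The divisibility is precisely what makes this cancellation possible: the relevant cross-terms have $Y$-exponents in an arithmetic progression of common difference $d$, and when $d \mid e$ one can place compensating monomials of matching exponents inside $\tau_2(t)$ (of the form $c_k(t) Y^{1+kd}$ for $0 \le k \le m$, so that $\deg \tau_2(t) = 1 + md = e+1$ is preserved) so that every $X$-dependent term in the limit is eliminated by a corresponding contribution.

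The main obstacle is the explicit combinatorial design of the scaling: one must find scaling exponents and coefficients solving a triangular linear system forced by the cancellation of $X$-dependent monomials, and verify that the surviving pure-$Y$ polynomial, together with the freedom provided by the outer affines $\alpha_0(t), \alpha_2(t)$, covers a dense subset of admissible $Q$. This is a direct but delicate multinomial calculation with coefficients likely involving $\binom{d+1}{j}$ and products over $0 \le k \le m$. It is precisely the step that fails without $d \mid e$: the binomial cross-term exponents would otherwise not coincide with a feasible choice of exponents in $\tau_2(t)$, leaving $X$-dependent residues in the limit. Once the family is constructed and convergence verified by continuity of composition, the reduction above yields $\G_{(d+e+1)} \subseteq \overline{\G_{(d+1,e+1)}}$.
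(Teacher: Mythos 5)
Your outline correctly identifies the underlying strategy: parametrize a length-two family that degenerates (at $t=0$, or $Z=0$ in the paper's valuative language) to a length-one element of higher degree, with the divisibility hypothesis making the required cancellations go through. The paper does essentially this in Theorem~\ref{thm:specialization}, building $\sigma = \tau_1 \alpha \tau_2$ over $\IC(Z)$ with $\tau_1, \tau_2$ triangular and $\alpha = (X, Y+Z^{d+e-1}X)$, and then appealing to Furter's Valuation Criterion (Theorem~\ref{valuativeCriterion}) rather than a raw limit; the two viewpoints are morally equivalent, with the criterion cleanly handling the convergence bookkeeping.

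However, there is a genuine gap: the heart of the matter --- what you call ``the explicit combinatorial design of the scaling'' and the ``triangular linear system forced by the cancellation'' --- is left entirely unaddressed. You flag it as the main obstacle and then simply assert that $d \mid e$ makes it work. That is precisely the content the argument has to supply, and it is not a small step. In the paper, the coefficients of $\tau_1$ are chosen so that it undoes a truncated formal inverse of the inner composition (this is where $V(Y,Z)$, a truncation of $U(I(Y,Z),Z)$, enters), and Lagrange inversion (Theorem~\ref{thm:LagrangeInversion}) converts the cancellation conditions into explicit polynomial conditions: the specialization must kill $g_{d,e},\ldots,g_{d+e-2,e}$, not kill $g_{d+e-1,e}$, and not kill the minor $a_{d,e}$ (Theorem~\ref{thm:partialSpecialization}, via Lemma~\ref{lem:vij}). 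For Edo's case, one then sets $\psi(x_j)=0$ for $j<e$ and chooses $\psi(x_e)$ appropriately; the divisibility $e \mid (d-1)$ (which, up to re-indexing and inversion, is the $d\mid e$ or $e \mid d$ of the statement) then makes the vanishing/nonvanishing pattern drop out of the support of the $g_{k,e}$, since only monomials with $\mathbf{a}\cdot\IN$ a multiple of $e$ survive the specialization. Your proposal gestures toward binomial coefficients $\binom{d+1}{j}$ and an arithmetic progression of exponents of step $d$, which does not match the actual combinatorics; more importantly, no system is actually solved, no surviving $Y$-polynomial is computed, and no density argument is completed. As written this is a plan, not a proof.
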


\begin{theorem}[Furter \cite{Furter15}]
Let $d,e \in \IN$.  If $d \leq 2$ or $e \leq 2$, then $\G_{(d+e+1)} \subset \overline{G_{(d+1,e+1)}}$.
\end{theorem}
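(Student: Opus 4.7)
The strategy is to exhibit, for each $\tau = (X + P(Y), Y) \in \G_{(d+e+1)}$, a one-parameter family $\theta_t \in \G_{(d+1, e+1)}$ satisfying $\lim_{t \to 0} \theta_t = \tau$. By the involution $\theta \mapsto \theta^{-1}$, which reverses the polydegree sequence and preserves Zariski closures, we may assume $e \leq 2$. Also, $\overline{\G_{(d+1,e+1)}}$ is stable under the left--right $\A \times \A$ action on $\G$ by pre- and post-composition (affine multiplication preserves polydegree), so we may fix convenient normalizations for $\tau$.

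The family would take the form
\[
\theta_t = \alpha_0(t) \circ \tau_1(t) \circ \alpha_1(t) \circ \tau_2(t) \circ \alpha_2(t),
\]
with all coefficients depending polynomially or rationally on $t$. For $t \neq 0$ the linear part of $\alpha_1(t)$ must remain non-triangular (otherwise the composition collapses to polydegree $(d+1)$), ensuring genuine polydegree $(d+1,e+1)$ of the total degree $(d+1)(e+1)$; as $t \to 0$ the two triangular factors should ``merge,'' producing a single triangular of degree $d+e+1$. Concretely, one designs $\alpha_1(t)$ together with the coefficients of the $\tau_i(t)$ so that (a) the high-degree coefficients in the first coordinate of $\theta_t$, which would a priori produce $Y$-degree $(d+1)(e+1)$, cancel against diverging affine corrections to leave finite limits; and (b) the resulting limit polynomial in $Y$ has degree exactly $d+e+1$ with leading term matching that of $P$.

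The main obstacle is realizing this cancellation and matching uniformly in $d$ with only a bounded pool of scalar parameters, namely the entries of the affines and the coefficients of $\tau_1(t)$ and $\tau_2(t)$. The restriction $e \leq 2$ is what rescues the count: it keeps $\tau_2(t)$ at most cubic, sharply limiting the number of independent cancellation equations. In the algorithmic framework introduced earlier in the paper, the existence of such a family translates into verifying that a specific polynomial lies in a specific ideal; for $e \in \{1,2\}$ this ideal membership reduces to a uniform-in-$d$ polynomial identity that can be checked directly, giving a single short verification in place of the ad hoc constructions of \cite{Furter15}. The case $d \leq 2$ then follows immediately by the polydegree-inversion symmetry used above.
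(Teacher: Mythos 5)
There is a genuine gap: your proposal describes the high-level strategy of constructing a degenerating one-parameter family, which is indeed what the Valuation Criterion makes possible, but it does not actually carry the strategy out, and the one concrete claim it makes about how the verification finishes is not correct. You never specify the family $\theta_t$ (or, in the paper's parametrization, the automorphism over $\IC(Z)$), never compute which cancellations are needed, and never verify that the required coefficients can be chosen; the entire content of the theorem is contained in exactly those steps. The assertion that for $e\in\{1,2\}$ ``this ideal membership reduces to a uniform-in-$d$ polynomial identity that can be checked directly'' is where the argument actually breaks. The paper does not verify $PIC(d,e)$ uniformly; instead it constructs, for each $d$, a specialization homomorphism whose existence depends on a non-vanishing statement (Theorem~\ref{thm:lambda}): one must find $\lambda\in\IC^*$ with $P_{d,d}(\lambda)=0$ while simultaneously $P_{d+1,d+1}(\lambda)\neq 0$ and a certain $2\times 2$ hypergeometric determinant $D(\lambda)\neq 0$. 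Proving this requires the explicit relations \eqref{eq:relation2}--\eqref{eq:relation3} among the hypergeometric polynomials $P_{k,n}$, the product formulas showing $-1/3$ and $-1/4$ are never roots of $p_d$, the simplicity of the roots of $p_d$, and the expression of $D$ as a rational combination of $P_{d,d}^2$, $P_{d+1,d+1}^2$, and $P_{d,d}P_{d+1,d+1}$. None of this is an identity that ``can be checked directly''; it is an existence argument that requires real analysis of a family of orthogonal-type polynomials, and without it the proof does not close. The reduction by polydegree-inversion symmetry (to assume $e\leq 2$) and the observation that $e=1$ is already covered by Edo's theorem are both fine, but they only reduce the problem to the $e=2$ case, which is precisely the case your proposal leaves unaddressed.
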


Furter's proof involved showing that the Polydegree Conjecture is equivalent to a conjecture he termed the Rigidity Conjecture, and proving the corresponding case of that conjecture.  We also note that Edo and van den Essen \cite{EdoArno} showed that the Polydegree Conjecture is equivalent to a third conjecture they termed the Strong Factorial Conjecture.


In this paper, we take a direct approach with the hopes that the techniques will prove more generalizable to the length 3 case.  We prove three successively stronger results (Theorems \ref{thm:ICDC}, \ref{thm:partialSpecializationSomeC}, and \ref{thm:partialSpecialization}) which provide means of concluding that Conjecture \ref{c3} holds in certain cases.  We state these in Section \ref{sec:theorems}, and prove them in Section \ref{sec:proof}.  Then, in Section \ref{sec:applications}, we show how our methods provide a unified (and much shorter) method of recovering the existing results of Edo and Furter.  We also prove that the Polydegree Conjecture is algorithmically decidable (for fixed $e$ and $d$), and use a computer to show that it holds for $d< 50$ in the $e=3$ case, and $d < 20$ in the $e=4$ case.  Finally, we affirmatively answer a question of Arzhantsev that arises naturally from \cite{ZAK} regarding the infinite transitivity of certain group actions on \(\IC^2\).

\subsection{A new conjecture}
We first establish some notation needed to state a conjecture, which we will prove (Theorem \ref{thm:ICDC}) implies the Polydegree Conjecture. 
For notational convenience, here and throughout, given a sequence of natural numbers ${\bf a}=(a_1,\ldots,a_e)$, we define
\begin{align*}
|{\bf a}|&=a_1+\cdots+a_e, & {\bf a} \cdot \IN &= a_1+2a_2+3a_3+\cdots+ea_e.
\end{align*}
For each $d,e \in \IN$, define $g_{d,e} \in \IZ[x_1,\ldots,x_e]$ by 
$$g_{d,e} = \frac{1}{d+1} \sum _{{\bf a} \cdot \IN = d} (-1)^{|{\bf a}|} \binom{d+|{\bf a}|}{{\bf a}, d} x_1 ^{a_1} \cdots x_e ^{a_e}, $$
where we used the usual notation for the multinomial coefficient.
These polynomials arise naturally in our proofs (see Lemma \ref{lem:vij}), and also arise out of the approach of Edo and van den Essen \cite{EdoArno}.
For each $i,j \in \IZ$ and $e \in \IN$, define $\alpha _{i,j,e} \in  \IZ[x_1,\ldots,x_e]$ by
\begin{equation}
\alpha _{i,j,e} = \sum _{{\bf a} \cdot \IN = j-i} (-1)^{|{\bf a}|} \binom{|{\bf a}|+j+2}{{\bf a}, j+2} x_1 ^{a_1}\ \cdots x_e ^{a_e}. \label{eq:alpha}
\end{equation}
Note that we have $g_{d,e}=\frac{1}{d+1}\alpha_{-2,d-2,e}$.
Further, we define $a_{d,e}$ to be the minor determinant
$$a_{d,e} = \det (\alpha _{i,j,e}\ \big|\ 0 \leq i \leq e-1,\  d-1 \leq j \leq d+e-2).$$
Again, these polynomials, while presently unmotivated, arise naturally (see Lemma \ref{lem:vij} below).  We now state a conjecture that we will show implies the Polydegree Conjecture.

\renametheorem{Polydegree Ideal Conjecture}
\begin{namedtheorem} [$PIC(d,e)$]In $\IQ[x_1,\ldots,x_e]$, 
 ${\rm rad}(g_{d,e},\ldots,g_{d+e-1,e})$ is a maximal ideal and 
$a_{d,e} \notin {\rm rad}(g_{d,e},\ldots,g_{d+e-2,e})$.  
\end{namedtheorem}

\subsection{Main results}\label{sec:theorems}
In this paper, we will prove three results which can be used to conclude cases of the Polydegree Conjecture.  The logical connection between the three is

\begin{equation*}
\text{Theorem \ref{thm:partialSpecialization}} \quad\Longrightarrow\quad \text{Theorem \ref{thm:partialSpecializationSomeC}} \quad\Longrightarrow\quad \text{Theorem \ref{thm:ICDC}.}
\end{equation*}

\begin{theorem}\label{thm:ICDC}
If $PIC(d,e)$ holds, then $\mathcal{G}_{(d+e)} \subset \overline{\mathcal{G}_{(d,e+1)}} $.
\end{theorem}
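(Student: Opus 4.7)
The plan is to realize each $\phi\in\G_{(d+e)}$ as the limit of a one-parameter family in $\G_{(d,e+1)}$. By the Jung--van der Kulk factorization, and since $\G_{(d+e)}$ is stable under composition with the affine group on both sides, it suffices to deform a single triangular $\tau_0\in\B$ of degree $d+e$ into an analytic family of the form $\tau_1(t)\,\alpha(t)\,\tau_2(t)$ with $\tau_1(t),\tau_2(t)\in\B$ of degrees $d$ and $e+1$, whose generic polydegree is $(d,e+1)$ and whose $t=0$ limit, after absorbing affine factors, recovers $\tau_0$.

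A product $\tau_1\alpha\tau_2$ has generic degree $d(e+1)$, so degeneration to polydegree $(d+e)$ requires a cascade of cancellations among successive leading coefficients of the composition. Packaging the relevant free parameters (coming from $\alpha$ and from the non-leading coefficients of $\tau_1,\tau_2$) into variables $x_1,\ldots,x_e$, the conditions for this cascade to occur are, by Lemma~\ref{lem:vij}, precisely the simultaneous vanishing of $g_{d,e},g_{d+1,e},\ldots,g_{d+e-1,e}$. By the Nullstellensatz, condition (a) of $PIC(d,e)$ then asserts that these $e$ polynomials in $e$ variables have a unique common zero $p\in\IC^e$, isolating the parameter locus where the target polydegree $(d+e)$ is attained.

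Condition (b), the non-membership of the minor determinant $a_{d,e}$ in $\mathrm{rad}(g_{d,e},\ldots,g_{d+e-2,e})$, should be read as a Jacobian-type nondegeneracy: with $a_{d,e}$ identified (again via Lemma~\ref{lem:vij}) as an appropriate Jacobian of the first $e-1$ of these $g$-polynomials at $p$, its nonvanishing on $V(g_{d,e},\ldots,g_{d+e-2,e})$ lets the implicit function theorem produce an analytic curve through $p$ along which $g_{d,e},\ldots,g_{d+e-2,e}$ vanish identically while $g_{d+e-1,e}$ does not. Along this curve, the generic polydegree is the intermediate value $(d,e+1)$, while the specialization at $p$ lies in $\G_{(d+e)}$. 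Varying the remaining free parameters and the outer affine factors then yields a family in $\G_{(d,e+1)}$ whose closure contains the prescribed $\phi$.

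The principal obstacle is not the theorem itself but the supporting combinatorial identification, carried out in Lemma~\ref{lem:vij}, of the specific polynomials $g_{d,e}$ and the determinant $a_{d,e}$ as the obstruction data governing the cancellation cascade in $\tau_1\alpha\tau_2$. Once that geometric-to-algebraic dictionary is in hand, the argument reduces to applying the Nullstellensatz and the implicit function theorem to the algebraic data guaranteed by $PIC(d,e)$, together with a dimension/transitivity check confirming that the deformations so constructed sweep out all of $\G_{(d+e)}$ rather than a single affine orbit within it.
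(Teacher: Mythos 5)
Your high-level intuition is on the right track --- the paper does indeed realize $\theta\in\G_{(d+e)}$ as a degeneration of a product $\tau_1\alpha\tau_2$ with the $g$-polynomials encoding the required cancellations, and $a_{d,e}$ really is (up to reindexing via Remark~\ref{remark:vg}) the Jacobian determinant of the map $(x_1,\dots,x_e)\mapsto(g_{d,e},\dots,g_{d+e-1,e})$. But there are several genuine gaps and misidentifications in how you deploy the two hypotheses of $PIC(d,e)$, and a key ingredient of the mechanism is missing.

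First, the roles you assign to conditions (a) and (b) are not the ones the paper uses. You claim (a) (maximality of the radical) isolates the locus where polydegree $(d+e)$ is attained, and that (b) ($a_{d,e}\notin\mathrm{rad}$) is a Jacobian of \emph{the first $e-1$} $g$'s fueling an implicit-function-theorem argument. Neither is quite right. The paper specializes at a fixed point $p\neq 0$ where $g_{d,e},\dots,g_{d+e-2,e}$ vanish and $a_{d,e}$ does not (supplied by (b)); condition (a) is then used only to guarantee $g_{d+e-1,e}(p)\neq 0$, via the contrapositive: if $g_{d+e-1,e}(p)=0$ then $p$ would be the unique common zero, namely the origin, where $a_{d,e}$ vanishes. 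And $a_{d,e}$ is an $e\times e$ determinant (involving all of $\alpha_{0,\cdot},\dots,\alpha_{e-1,\cdot}$), not a Jacobian of $e-1$ functions; moreover its actual use (via Lemma~\ref{lem:vij}) is to guarantee that $R_r = R_{r-1}[v_{r,d-1},\dots,v_{r,d+e-2}]$ for each $r\geq 1$, so that the specialization homomorphism $\psi$ can be extended inductively level by level to dial in the coefficients $c_1,\dots,c_{d+e}$ of $\theta$ one at a time. Your proposal contains no analogue of this inductive level-by-level construction of $\psi_r$.

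Second, the mechanism of degeneration is different. The paper does not deform along a curve in the $x$-parameter space $\IC^e$; it fixes the specialization point $p$ once and for all and instead degenerates in an \emph{auxiliary} parameter $Z$ via the Valuation Criterion (Theorem~\ref{valuativeCriterion}): $\sigma=\tau_1\alpha\tau_2$ is built over $\IC(Z)$ with components that a priori have poles in $Z$, one shows these poles cancel so $\sigma\in\G(\IC[Z])$, and then $\bar\sigma=\theta$. Your analytic-curve / implicit-function-theorem version conflates the $x$-parameters with the deformation parameter $t$, and as you yourself note in the final paragraph, you have not actually carried out the ``geometric-to-algebraic dictionary'' (the explicit construction of $\tau_1,\alpha,\tau_2$, the formal-inverse computation, the Lagrange-inversion identities of Lemma~\ref{lem:vij}) that makes the whole argument go through. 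Without that, what you have is a plausible outline, not a proof, and the specific misreadings of $PIC(d,e)$ above would cause a careful attempt to carry it out to stall.
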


We will prove Theorem \ref{thm:partialSpecialization} in Section \ref{sec:proof}, and the above implications here. Note that Theorem \ref{thm:ICDC} is immediately implied by
\begin{theorem}\label{thm:partialSpecializationSomeC}
Let $d\geq 2,e \geq 1$, and suppose that there exists a specialization homomorphism $\psi: \IC[x_1\ldots,x_e] \rightarrow \IC$ such that 
\begin{enumerate}
\item $\psi (g_{d+i,e}) =0$ for each $0 \leq i \leq e-2$;
\item $\psi (g_{d+e-1,e})\neq 0$; and
\item $\psi (a_{d,e}) \neq 0$.  
\end{enumerate}
Then $\mathcal{G}_{(d+e)} \subset \overline{\mathcal{G}_{(d,e+1)}}$.
\end{theorem}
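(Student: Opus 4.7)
The plan is to show $\G_{(d+e)} \subset \overline{\G_{(d,e+1)}}$ constructively: given $\phi \in \G_{(d+e)}$, I would build an explicit one-parameter family $\phi_t \in \G_{(d,e+1)}$ with $\lim_{t \to \infty} \phi_t = \phi$ in the ind-variety topology on $\G$. The first reduction exploits that both $\G_{(d+e)}$ and $\overline{\G_{(d,e+1)}}$ are stable under left and right multiplication by $\A$ (the closure, because multiplication by an affine is a morphism of the ind-variety), so it suffices to fix a single convenient representative of $\G_{(d+e)}$---say, $\phi = \sigma$ for some triangular $\sigma \in \B$ of degree $d+e$ in Jung-van der Kulk normal form---and show that this one element lies in the closure.

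Second, I would set up an ansatz family of the shape $\phi_t = \tau_1(t,x) \cdot \beta \cdot \tau_2(t,x)$, in which $\tau_1, \tau_2$ are triangular of degrees $d$ and $e+1$, $\beta \in \A$ is a fixed affine (for instance the coordinate swap), and the coefficients of $\tau_1, \tau_2$ are explicit polynomials in $t$ and in the specialization values $\psi(x_1),\ldots,\psi(x_e)$. Generically such a composition has degree $d(e+1)$; for the limit as $t \to \infty$ to exist at all, the coefficients at every total degree strictly greater than $d+e$ must vanish. Invoking Lemma \ref{lem:vij}, I expect to identify these obstructing coefficients, up to affine rescaling, with the polynomials $g_{d,e}, g_{d+1,e}, \ldots, g_{d+e-2,e}$ evaluated at $(x_1,\ldots,x_e)$, so that hypothesis (1) precisely forces them all to vanish at the chosen specialization.

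Third, hypothesis (2), $\psi(g_{d+e-1,e}) \neq 0$, should then ensure that the coefficient at total degree $d+e$ itself does not also vanish, preventing the limit from collapsing into some smaller stratum $\G_{(f)}$ with $f<d+e$; hence the limit lies in $\G_{(d+e)}$. Hypothesis (3), $\psi(a_{d,e}) \neq 0$, I expect to surface as the nonvanishing of a Jacobian/resultant for the map from the free parameters of the family to the normal-form data of the limit, so that an implicit-function argument on the ind-variety guarantees that by varying $t$ together with the remaining free parameters of $\tau_1, \tau_2$, one can realize any chosen representative of $\G_{(d+e)}$. Combined with the $\A \times \A$-reduction of Step 1, this gives the desired inclusion. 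The principal obstacle is the combinatorial heart of Step 2: the explicit identification, promised by Lemma \ref{lem:vij}, of the leading-degree coefficients of $\tau_1 \beta \tau_2$ with the $g_{d+i,e}$ and of the relevant Jacobian with $a_{d,e}$; once these identifications are established, the three conditions on $\psi$ translate transparently into existence of the limit, nondegeneracy of its polydegree, and surjectivity of the family onto a Zariski-dense subset of $\G_{(d+e)}$.
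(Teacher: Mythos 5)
Your outline aligns closely with the paper's actual strategy for this theorem. In the paper, Theorem \ref{thm:partialSpecializationSomeC} is deduced from Theorem \ref{thm:partialSpecialization} by a short rescaling conjugation $\delta=(X,\lambda Y)$, and the latter is proved by constructing exactly the kind of family you propose: $\sigma = \tau_1\,\alpha\,\tau_2 \in \G_{(f,g)}(\IC(Z))$ with $f \le d$, $g \le e+1$, whose image modulo $Z$ hits a prescribed triangular target, with Lemma \ref{lem:vij} supplying the promised identification of the obstruction coefficients with the $g_{d+i,e}$ and of the relevant determinant with $a_{d,e}$. So you have the right skeleton. However, as written, the sketch has gaps that would have to be closed before it constitutes a proof.

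First, the middle factor cannot be a \emph{fixed} affine such as the coordinate swap. The paper uses $\alpha = (X, Y + Z^{d+e-1}X)$, which degenerates to the identity as $Z \to 0$; this $Z$-dependence is essential. If $\beta$ were independent of the parameter and $\tau_1,\tau_2$ retained degrees $d$ and $e+1$ with finite limits, the limiting automorphism would still have degree $d(e+1)$ and lie in $\G_{(d,e+1)}$ itself, not fall into the lower stratum $\G_{(d+e)}$. To force the drop, the paper gives $\tau_1, \tau_2$ coefficients with $Z^{-(d+e-2)}$ poles, compensated by the $Z^{d+e-1}$ in $\alpha$ — which is also why the closure is detected via the Valuation Criterion (Theorem \ref{valuativeCriterion}) over $\IC(Z)$, not by a pointwise limit $t \to \infty$ in the ind-variety topology; the latter is not by itself a rigorous mechanism. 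Second, ``fix a single convenient representative of $\G_{(d+e)}$'' conflicts with the fact that the $\A \times \A$-orbits in $\G_{(d+e)}$ form a positive-dimensional family; your Step 3 partially walks this back by invoking surjectivity, but then the family must carry far more than $e+1$ free parameters. In the paper there are $e$ fresh parameters $u_{i,0},\ldots,u_{i,e-1}$ at each level $i = 0,\ldots,d+e$, and the nonvanishing of $\psi(a_{d,e})$ is precisely the invertibility of the linear map from $(u_{i,\cdot})$ to the leading parts of $(v_{i,d-1},\ldots,v_{i,d+e-2})$ at each level $i\geq 1$; the target coefficients $c_0,\ldots,c_{d+e}$ are then matched by solving a triangular linear system inductively, not by an implicit-function argument. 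Third, the ``combinatorial heart'' you defer — identifying the $v_{i,j}$ via Lagrange inversion and extracting the polynomials $g_{d+i,e}$ and $a_{d,e}$ — is Lemma \ref{lem:vij} and is the substantive content of the proof; an outline that records only ``I expect to identify'' has not yet engaged with it.
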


By strengthening the hypotheses of this theorem, we can obtain a slightly stronger conclusion which will be necessary to answer a question of Arzhantsev in Section \ref{sec:Arzhantsev}.  To state this, we introduce from \cite{ZAK} the subgroups 
\begin{align*}
H_d &= \left\{ (X,Y+aX^d)\ \mid\ a \in \IC\right\}, &
K_d &= \left\{ (X+aY^d,Y)\ \mid\ a \in \IC\right\}.
\end{align*}

\begin{theorem}\label{thm:partialSpecialization}
Let $d\geq 2,e \geq 1$, and let $c_0 \in \IC^*$.  Suppose that there exists a specialization homomorphism $\psi: \IC[x_1\ldots,x_e] \rightarrow \IC$ such that 
\begin{enumerate}
\item $\psi (g_{d+i,e}) =0$ for each $0 \leq i \leq e-2$;
\item $\psi (g_{d+e-1,e})=c_0$; and
\item $\psi (a_{d,e}) \neq 0$.  
\end{enumerate}
Then, for any $c_1,\ldots,c_{d+e} \in \IC$, we have
 $$(X+\sum _{r=0} ^{d+e} c_r Y^{d+e-r},Y) \in  \overline{\mathcal{G}_{(d,e+1)}} \cap  \overline{\langle K_d,H_1,K_{e+1} \rangle}.$$
\end{theorem}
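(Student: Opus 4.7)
The plan is to construct an explicit one-parameter family $\{\theta_t\}$ in $\mathcal{G}_{(d,e+1)} \cap \langle K_d, H_1, K_{e+1}\rangle$ depending on the data of $\psi$ and on tunable free parameters, and to show that $\theta_t$ converges in the Zariski topology on $\G$ to the prescribed shear $(X + \sum_{r=0}^{d+e} c_r Y^{d+e-r}, Y)$ as $t$ approaches a degenerate value (for concreteness, $t \to 0$). Because each $\theta_t$ lies in both $\mathcal{G}_{(d,e+1)}$ and $\langle K_d, H_1, K_{e+1}\rangle$ by construction, the limit automatically lies in the intersection of the two closures.

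Setting $\lambda_i := \psi(x_i)$, I would build $\theta_t$ as a suitable product of generators drawn from $K_d$, $H_1$, and $K_{e+1}$ with coefficients that are specific rational functions of $t$ involving the $\lambda_i$ and the free parameters. Expanding this product, I expect via Lemma \ref{lem:vij} that the coefficient of $Y^{d+e-1-i}$ in the $X$-component of $\theta_t$ is, up to a simple pole at $t = 0$, proportional to $g_{d+i,e}(\lambda)$ for $0 \le i \le e-1$, while the way the remaining free parameters enter the lower-degree coefficients of the $X$-component is governed by the matrix of $\alpha_{i,j,e}(\lambda)$ whose determinant defines $a_{d,e}$. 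The $Y$-component of $\theta_t$ would be arranged so that it collapses to $Y$ in the limit.

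The three hypotheses on $\psi$ then line up precisely. The $e-1$ vanishing conditions $\psi(g_{d+i,e}) = 0$ for $0 \le i \le e-2$ kill the polar parts of the relevant coefficients, allowing $\theta_t$ to extend regularly to $t = 0$. The remaining value $\psi(g_{d+e-1,e}) = c_0$ then supplies the leading $c_0 Y^{d+e}$ coefficient of the limit. Prescribing the lower-order coefficients $c_1, \ldots, c_{d+e}$ to arbitrary values amounts to solving a linear system in the free parameters whose coefficient matrix is (a rearrangement of) that defining $a_{d,e}$; the hypothesis $\psi(a_{d,e}) \neq 0$ is exactly the invertibility condition that makes this system uniquely solvable.

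The main obstacle, as I see it, is to write down the family $\theta_t$ explicitly and to establish the coefficient expansion described above---that is, the content of Lemma \ref{lem:vij}, which identifies the divergent coefficients of a composition of elements from $K_d$, $H_1$, and $K_{e+1}$ with the polynomials $g_{j,e}$ and the free-parameter dependence with the $\alpha_{i,j,e}$. Once that expansion is in hand, the remainder of the argument---using the $e-1$ vanishing hypotheses to cancel divergences and then inverting a matrix with determinant $a_{d,e}$ to realize any prescribed $c_1, \ldots, c_{d+e}$---is essentially routine bookkeeping.
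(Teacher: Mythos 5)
Your proposal correctly identifies the overall strategy, which is indeed the one the paper pursues: build a one-parameter family inside $\G_{(d,e+1)}$ and $\langle K_d, H_1, K_{e+1}\rangle$, let the parameter degenerate, use the $e-1$ vanishing hypotheses on the $g_{d+i,e}$ to kill the polar parts, use $\psi(g_{d+e-1,e}) = c_0$ to supply the leading coefficient, and use $\psi(a_{d,e}) \neq 0$ to invert the matrix of $\alpha_{s,j,e}$'s so that the free parameters can be tuned to realize any $c_1,\ldots,c_{d+e}$. You also correctly place Lemma \ref{lem:vij} at the center of the coefficient bookkeeping. So the route is right, and the limiting picture corresponds to the paper's use of the Valuation Criterion (Theorem \ref{valuativeCriterion}) with $Z$ playing the role of your $t$.

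The gap is that everything past ``the plan is'' is deferred. The proposal never writes down the family $\theta_t$, never verifies that its coefficients have the claimed polar expansion, and never carries out the inductive tuning of parameters. Those are exactly the nontrivial pieces the paper supplies. Concretely, the paper's proof goes through an intermediate Theorem \ref{thm:specialization}: one defines $U(Y,Z)$ and its compositional inverse $I(Y,Z)$, forms the truncation $V(Y,Z)$, sets $\sigma = \tau_1 \alpha \tau_2 \in \G(\IC(Z))$ with $\tau_1 = (X - V/Z^{d+e-2} + T_0, Y)$, $\alpha = (X, Y + Z^{d+e-1}X)$, $\tau_2 = (X + U/Z^{d+e-2}, Y)$, and then shows $W = U - V(Y + ZU, Z)$ lies in $(Z^{d+e-2})$ so that $\sigma$ is defined over $\IC[Z]$ and reduces mod $Z$ to $(X + T(Y), Y)$. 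Separately, the passage from your $\psi$ on $\IC[x_1,\ldots,x_e]$ (i.e., $R_0$) to a specialization of the much larger ring $R_{d+e}$ requires the inductive step: Lemma \ref{lem:vij}(b) shows $v_{i,j} = p_{i,j} + \sum_s \alpha_{s,j,e}\, u_{i,s}$ for $i \geq 1$, and $\psi(a_{d,e}) \neq 0$ lets you solve for $u_{i,0},\ldots,u_{i,e-1}$ at each level so that $v_{r,d-1+i} = 0$ for $i < e-1-r$ and $v_{r,d+e-2-r} = c_r$. Your sketch gestures at all of this but proves none of it; until the explicit $\sigma$, the divisibility $W \in (Z^{d+e-2})$, and the inductive specialization $\psi_{d+e}$ are actually established, the argument is a program rather than a proof.
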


\begin{proof}[Proof that Theorem \ref{thm:partialSpecialization} implies Theorem \ref{thm:partialSpecializationSomeC}]
Suppose Theorem \ref{thm:partialSpecialization} is true.  Let $c_0 \in \IC^*$ and $c_1,\ldots,c_{d+e} \in \IC$.  It suffices to show that $$\theta := (X+\sum _{r=0} ^{d+e} c_r Y^{d+e-r},Y) \in  \overline{\mathcal{G}_{(d,e+1)}}.$$
Suppose  $\psi : \IC[x_1,\ldots,x_e] \rightarrow \IC$ is a specialization homomorphism satisfying  the hypotheses of Theorem \ref{thm:partialSpecializationSomeC} that
\begin{align*}
\psi (g_{d+i,e}) &=0 \text{ for each $0 \leq i \leq e-2$}, & 
\psi (g_{d+e-1,e})& \neq 0, & 
\psi (a_{d,e}) &\neq 0.
\end{align*}
Then we define $\lambda, b_0 \in \IC^*$ and $b_1,\ldots,b_{d+e} \in \IC$ by
\begin{align*}
b_0&=\psi(g_{d+e-1,e}), & \lambda & = \left(\frac{c_{0}}{b_{0}}\right)^\frac{1}{d+e}, &  b_i &= \frac{c_i}{\lambda ^{d+e-i}} \text{ for $1 \leq i \leq d+e$.}  
\end{align*}
By Theorem \ref{thm:partialSpecialization}, we have
$$\theta _0 := (X+\sum _{r=0} ^{d+e} b_r Y^{d+e-r},Y) \in  \overline{\mathcal{G}_{(d,e+1)}}.$$
However, it is easy to check that letting $\delta = (X,\lambda Y)$, we have $\theta = \delta ^{-1} \theta _0 \delta \in \overline{\mathcal{G}_{(d,e+1)}}$.
\end{proof}

The proof of Theorem \ref{thm:partialSpecialization} is given in Section \ref{sec:proof}.  
Theorem \ref{thm:ICDC} is aesthetically nicer and well suited for checking individual cases with the aid of computer algebra (see Section~\ref{sec:computer}). On the other hand, we find Theorem \ref{thm:partialSpecializationSomeC} and Theorem \ref{thm:partialSpecialization} more convenient for proving the results in Section \ref{sec:applications}.

\subsection{The Valuation Criterion}

The key technical tool we use below is the Valuation Criterion (due to Furter \cite{Furter09}), which we state in a slightly weaker version here:
\begin{theorem}\label{valuativeCriterion}
Let $\theta \in \G_{(d_1,\ldots,d_r)}(\IC(Z))$.  If $\theta \in \G(\IC[Z])$, then modulo $Z$ we have $\bar{\theta} \in \overline{\G_{(d_1,\ldots,d_r)}(\IC)}$.
\end{theorem}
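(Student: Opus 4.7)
The plan is to view the hypothesis $\theta \in \G(\IC[Z])$ as providing an algebraic family of plane automorphisms parametrized by $Z$, i.e., a morphism of ind-varieties $\phi : \mathbb{A}^1_\IC \to \G$ with $\phi(z) = \theta|_{Z=z}$. Since $\bar\theta = \phi(0)$, the theorem reduces to showing $\phi(0) \in \overline{\G_{(d_1,\ldots,d_r)}(\IC)}$, and the natural route is to exhibit a Zariski-dense subset $U \subseteq \mathbb{A}^1_\IC$ on which $\phi$ lands in $\G_{(d_1,\ldots,d_r)}(\IC)$; continuity of $\phi$ then gives $\phi(0) \in \overline{\phi(U)} \subseteq \overline{\G_{(d_1,\ldots,d_r)}(\IC)}$.

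To construct $U$, I would first apply the Jung--van der Kulk theorem over the field $\IC(Z)$ to write
$$\theta = \alpha_0 \tau_1 \alpha_1 \cdots \tau_r \alpha_r,$$
with $\alpha_i \in \A(\IC(Z))$ and $\tau_i \in \B(\IC(Z)) \setminus \A(\IC(Z))$ of prescribed degrees $d_1,\ldots,d_r$. The coefficients appearing in the $\alpha_i$ and $\tau_i$ are rational functions of $Z$ whose denominators can be simultaneously cleared by a single nonzero $f(Z) \in \IC[Z]$. For any $z \in \IC$ with $f(z) \ne 0$, the factorization specializes termwise, yielding $\phi(z) = \alpha_0(z) \tau_1(z) \cdots \tau_r(z) \alpha_r(z) \in \G(\IC)$.

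Next I would argue that for all but finitely many such $z$, this specialized factorization remains a bona fide Jung--van der Kulk factorization of $\phi(z)$ realizing the polydegree $(d_1,\ldots,d_r)$. The only way this can fail is if some $\tau_i(z)$ drops in degree, and that happens precisely when the leading coefficient $\ell_i(Z) \in \IC(Z)$ of the $Y^{d_i}$-term in the $X$-component of $\tau_i$ specializes to $0$. Each $\ell_i$ is a nonzero rational function of $Z$ (nonzero because $\tau_i$ has generic degree exactly $d_i$), hence vanishes on a finite set. Removing these zeros along with those of $f$ leaves a cofinite, hence Zariski-dense, subset $U \subseteq \mathbb{A}^1_\IC$ on which $\phi(z) \in \G_{(d_1,\ldots,d_r)}(\IC)$.

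The main hurdle is verifying that $\phi$ is genuinely a morphism of ind-varieties at $z=0$ --- equivalently, that $\theta$ moves within a single finite-dimensional substratum of $\G$ as $z$ varies, so that $\phi(0)$ really exists inside $\G(\IC)$ rather than escaping to infinity. This is precisely where $\theta \in \G(\IC[Z])$ (rather than merely $\G(\IC(Z))$) is indispensable: writing $\theta = (P(X,Y,Z), Q(X,Y,Z))$ with $P,Q \in \IC[X,Y,Z]$ of bounded $X,Y$-degrees, the coefficient vector of $(P,Q)$ in $X$ and $Y$ depends polynomially on $Z$, so $\phi$ is literally the morphism $\mathbb{A}^1_\IC \to (\IC[X,Y]_{\le N})^2$ sending $z$ to this coefficient vector, with image in the locally closed stratum $\G_{\le N}$. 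With this ind-variety interpretation in place, Zariski continuity of $\phi$ is automatic and the argument concludes as above.
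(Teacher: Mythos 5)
A preliminary remark: the paper does not prove this statement --- it is quoted (in weakened form) from Furter \cite{Furter09} --- so there is no internal proof to compare against. Your specialization argument is the standard route to this ``easy direction'' of Furter's valuative criterion, and its overall structure is sound: the hypothesis $\theta\in\G(\IC[Z])$ gives a morphism $\phi:\mathbb{A}^1_\IC\to\G$ landing in a fixed bounded-degree stratum (with every fiber an automorphism, since the inverse of $\theta$ also has coefficients in $\IC[Z]$), and exhibiting a cofinite $U$ with $\phi(U)\subset\G_{(d_1,\ldots,d_r)}(\IC)$ yields $\phi(0)\in\phi(\overline{U})\subset\overline{\phi(U)}\subset\overline{\G_{(d_1,\ldots,d_r)}(\IC)}$.

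There is, however, an incomplete case analysis at the key step. You assert that the specialized word $\alpha_0(z)\tau_1(z)\cdots\tau_r(z)\alpha_r(z)$ can fail to realize the polydegree $(d_1,\ldots,d_r)$ only when some $\tau_i(z)$ drops in degree. That is not the only degeneration: the polydegree is read off from a \emph{reduced} word in the amalgamated free product, and reducedness also requires each intermediate factor $\alpha_i(z)$ ($1\le i\le r-1$) to lie in $\A(\IC)\setminus\B(\IC)$. If some $\alpha_i(z)$ specializes into $\A\cap\B$, then $\tau_i(z)\alpha_i(z)\tau_{i+1}(z)$ collapses to a single triangular (or even affine) element, changing the length or the degrees; e.g.\ $(X+Y^2,Y)\circ(-X,Y)\circ(X+Y^2,Y)=(-X,Y)$. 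You also need each $\alpha_i(z)$ to remain invertible and each $\tau_i(z)=(a(z)X+P(Y,z),b(z)Y+c(z))$ to remain an automorphism, i.e.\ $a(z),b(z)\neq 0$. All of these additional degenerations are governed by the non-vanishing of finitely many nonzero rational functions of $Z$, so $U$ stays cofinite and the argument can be completed in exactly the spirit you set up; but as written these cases are missing and the claim ``the only way this can fail'' is false.
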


This provides us with a very useful algebraic way to check the topological condition of containment in the closure.  Given $\theta \in \G(\IC(Z))$, we frequently verify $\theta \in \G(\IC[Z])$ by appealing to the overring principle (\cite{ArnoBook}, Lemma 1.1.8); if the Jacobian determinant of $\theta$ lies in $\IC^*$, to check $\theta \in \G(\IC[Z])$ one simply needs to verify that each component of $\theta$ lies in $\IC[Z][X,Y]$.

\begin{example}\label{ex:Nagata}
Let $\theta = (X+\frac{Y^2}{Z},Y) \circ (X,Y+Z^2X) \circ (X-\frac{Y^2}{Z},Y) \in \G_{(2,2)}(\IC(Z))$.  Simplifying, we see $$\theta = \left(X+2Y(XZ-Y^2)+Z(XZ-Y^2)^2,Y+Z(XZ-Y^2)\right). $$  
Since $\theta \in \G(\IC(Z))$ and both components lie in $\IC[Z]$, the overring principle implies that $\theta \in \G(\IC[Z])$.  Thus, applying the Valuation Criterion, we can conclude $\overline{\theta} = (X-2Y^3,Y) \in \overline{\G_{(2,2)}}$.

\end{example}

The previous example can easily be modified to show that $\G_{(3)} \subset \overline{\G_{(2,2)}}$.  In fact, the heart of our argument is a generalization of this construction (cf. \eqref{eq:tausigma} in the proof of Theorem \ref{thm:specialization}).  

\section{Proof of Theorem \ref{thm:partialSpecialization}}\label{sec:proof}
Throughout this section, let $d\geq 2$ and $e \geq 1$ be fixed.  In order to prove Theorem \ref{thm:partialSpecialization}, we will require an intermediate result (Theorem \ref{thm:specialization}); stating this will require some notation which we presently introduce.  We then prove Theorem \ref{thm:specialization} before using it to prove Theorem \ref{thm:partialSpecialization} in Section \ref{sec:t5}.  Before proceeding, the reader may wish to lightly read the proof of Theorem \ref{thm:specialization} first (located in Section \ref{sec:intermediate}) to understand the motivation of these definitions.

Define $R_{-1}=\IC$ and for each $0 \leq i \leq d+e$, set $R_i = R_{i-1}[u_{i,0},\ldots,u_{i,e-1}]$ for variables $u_{i,j}$ (so $R_i \cong \IC^{[(i+1)e]}$).   Further define $U_i(Y,Z) \in R_i[Y,Z]$ and $U(Y,Z) \in R_{d+e}[Y,Z]$ by 
\begin{align*}
U_i (Y,Z) &= \sum _{j=0} ^{e-1} u_{i,j} Y^{j+2}Z^j, &
U(Y,Z) &= \sum _{i=0} ^{d+e} U_i(Y,Z) Z^i.
\end{align*}
We will use $\deg _{(1,-1)}$ to denote the $(1,-1)$-degree grading on $R_{d+e}[Y,Z]$; that is, $\deg _{(1,-1)} Y = 1$ and $\deg _{(1,-1)} Z=-1$.  It is immediate from the preceding definitions that $U(Y,Z) \in (Y^2)$.  Moreover, each $U_i$ is homogeneous of $\deg _{(1,-1)} U_i(Y,Z) = 2$, and thus $\deg _{(1,-1)} U(Y,Z)=2$. Let $I(Y,Z) \in R_{d+e}[Z][[Y]]$ be the formal inverse of $Y+ZU(Y,Z)$, so that
\begin{equation}\label{eq:YI}
Y=I(Y+ZU(Y,Z),Z). 
\end{equation}

We make a few observations about these definitions before proceeding.
\begin{lemma} With $U(Y,Z)$ and $I(Y,Z)$ defined as above, we have
\begin{enumerate}
\item $U(I(Y,Z),Z) \in (Y^2)$,
\item $\deg _{(1,-1)} I(Y,Z) = 1$,
\item $ \deg _{(1,-1)} U(I(Y,Z),Z) = 2$.
\end{enumerate}
\end{lemma}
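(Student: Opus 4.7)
The plan is to handle the three parts by tracking $Y$-adic order (for (a)) and the $(1,-1)$-degree (for (b) and (c)) through the defining functional equation \eqref{eq:YI}.

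For (a), I would first establish $I(0,Z) = 0$ by substituting $Y = 0$ in \eqref{eq:YI}: since $U \in (Y^2)$ gives $U(0,Z) = 0$, we obtain $0 = I(0,Z)$. Hence $I \in (Y)$ in $R_{d+e}[Z][[Y]]$, and substituting $Y \mapsto I(Y,Z)$ into $U \in (Y^2)$ immediately yields $U(I,Z) \in (I^2) \subseteq (Y^2)$.

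For (b) and (c), I would rewrite \eqref{eq:YI} as the fixed-point equation $I = Y - Z\,U(I,Z)$ and solve it by $Y$-adic iteration starting from $I_0 = Y$, $I_{n+1} = Y - Z\,U(I_n, Z)$. The key step is to prove inductively that every monomial of $I_n$ has $(1,-1)$-degree at most $1$: the monomials of $U$ are of the form $u_{i,j} Y^{j+2} Z^{i+j}$ with $(1,-1)$-degree $2-i \leq 2$, so substituting an expression of degree $\leq 1$ for $Y$ preserves the bound $2$, and the extra factor of $Z$ in the iteration formula lowers it to $1$. Passing to the $Y$-adic limit gives $\deg_{(1,-1)} I \leq 1$, and equality follows because $I$ contains the monomial $Y$ with coefficient $1$. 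This proves (b), and the same bookkeeping gives $\deg_{(1,-1)} U(I,Z) \leq 2$.

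To finish (c), I would exhibit an uncanceled monomial of degree exactly $2$. Since $I \equiv Y \pmod{Y^2}$, the $Y^2$-coefficient of $U(I,Z)$ comes only from the $j=0$ terms of $U$ and equals $\sum_i u_{i,0} Z^i$; among these, only $u_{0,0}$ produces a monomial of $(1,-1)$-degree exactly $2$, and it is nonzero since $u_{0,0}$ is a generator of $R_{d+e}$. The main technical point throughout is the degree-preservation step in the induction, which reduces to direct bookkeeping once the monomials of $U$ are written out explicitly; the substitution identity in (a) and the lower bound in (c) then fall out immediately.
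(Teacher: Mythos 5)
Your proof is correct. Part (a) is essentially the same as the paper's. For parts (b) and (c), however, you take a genuinely different route: the paper works directly from the identity $Y = I(Y + ZU(Y,Z),Z)$ and reads off $\deg_{(1,-1)} I = \deg_{(1,-1)} Y$, implicitly invoking the fact that the substitution $Y \mapsto Y + ZU(Y,Z)$ preserves $\deg_{(1,-1)}$ (since $Y + ZU$ has degree $1$ and the substitution is injective on the associated graded). You instead use the \emph{other} form of the functional equation, $I = Y - Z\,U(I,Z)$, and run the $Y$-adic fixed-point iteration $I_{n+1} = Y - Z\,U(I_n,Z)$, propagating the degree bound $\deg_{(1,-1)} I_n \leq 1$ by explicit monomial bookkeeping and passing to the limit. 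What your approach buys is that it makes the degree-preservation step fully explicit --- the inductive bound and the convergence argument replace the paper's terse appeal to degree invariance under the substitution --- at the mild cost of having to justify that the iteration converges in the $Y$-adic topology (it does, since $U \in (Y^2)$ makes the map a strict contraction). Your explicit exhibition of the uncanceled monomial $u_{0,0} Y^2$ to pin down the lower bound in (c) is a nice touch that the paper leaves implicit. Both arguments are sound; yours is longer but leaves no gaps to fill.
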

\begin{proof}
First, note that, since $U \in (Y^2)$, we have $I \in (Y)$, and thus $U(I(Y,Z),Z) \in (Y^2)$.  
For the second statement, since $\deg _{(1,-1)} U=2$, we see $\deg _{(1,-1)} \left( Y+ZU(Y,Z) \right)= 1$, and thus, from \eqref{eq:YI}, $\deg _{(1,-1)} I(Y,Z) = \deg _{(1,-1)} I(Y+ZU(Y,Z),Z) = \deg _{(1,-1)} Y = 1$.  The third statement follows immediately from the second since $\deg _{(1,-1)} U(Y,Z)=2$.
\end{proof}
It follows from the third part of the previous lemma that we can write 
\begin{align}
U(I(Y,Z),Z)&=\sum _{i=0} ^\infty V_i(Y,Z)Z^i & \text{where} & & V_i(Y,Z)&=\sum _{j=0} ^\infty v_{i,j} Y^{j+2}Z^j \label{eq:Vi}
\end{align}
for some $v_{i,j} \in R_{d+e}$.  Note each $V_i$ is homogeneous of $\deg _{(1,-1)} V_i = 2$.  

\subsection{An intermediate theorem}\label{sec:intermediate}
We are now ready to state the intermediate specialization theorem.
\begin{theorem}\label{thm:specialization}Let $d\geq 2,e \geq 1$, let $c_0 \in \IC^*$, and let $c_1,\ldots,c_{d+e} \in \IC$.  
Suppose that there exists a specialization homomorphism $\psi : R_{d+e} \rightarrow \IC$ such that, for each $0 \leq r \leq e-1$, 
\begin{enumerate}
\item $\psi (v_{r,d-1 +i }) = 0$ for each $0 \leq i <e-1-r$; and
\item $\psi (v_{r,d+e-2-r}) = c_{r}$.
\end{enumerate}
Then
 $$(X+\sum _{r=0} ^{d+e} c_r Y^{d+e-r},Y) \in  \overline{\mathcal{G}_{(d,e+1)}}.$$
 Moreover, if $\psi(u_{0,e-1}) \neq 0$, then $(X+\sum _{r=0} ^{d+e} c_r Y^{d+e-r},Y)\in  \overline{\langle K_f,H_1,K_{e+1} \rangle}$  for some $2 \leq f \leq d$ as well.
\end{theorem}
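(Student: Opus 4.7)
The plan is to generalize Example \ref{ex:Nagata} by constructing an explicit family $\theta_Z \in \G(\IC(Z))$, polynomial in the auxiliary variables $u_{i,j}$, such that over $\IC(Z)$ its Jung--van der Kulk factorization (the forthcoming equation \eqref{eq:tausigma}) has polydegree $(d, e+1)$. Then, provided we can verify that $\psi(\theta_Z) \in \G(\IC[Z])$ and that its reduction modulo $Z$ equals the target $(X + \sum_r c_r Y^{d+e-r}, Y)$, the Valuation Criterion (Theorem \ref{valuativeCriterion}) delivers the desired containment. The shape of $\theta_Z$ should be a product of two triangulars of degrees $d$ and $e+1$ (both built from $U(Y,Z)/Z$, hence carrying poles at $Z=0$) separated by an affine non-triangular ``connector''; this specializes to the conjugation in Example \ref{ex:Nagata} in the balanced case $d = e + 1 = 2$.

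The crux of the argument is showing $\psi(\theta_Z) \in \G(\IC[Z])$. The overring principle reduces the check to verifying that each component is polynomial in $Z$, since the Jacobian of $\theta_Z$ is automatically a unit. Expanding $\theta_Z$, the $X$-component contains a priori a Laurent polynomial in $Z$ whose negative-$Z$-power part can be rewritten, via the defining relation $Y = I(Y + ZU(Y,Z),Z)$, in terms of the series $U(I(Y,Z),Z) = \sum_i V_i(Y,Z) Z^i = \sum_{i,j} v_{i,j} Y^{j+2} Z^{i+j}$ from \eqref{eq:Vi}. Careful indexing should identify the pole coefficients with precisely those $v_{r, d-1+i}$ indexed by $0 \leq r \leq e-1$ and $0 \leq i < e-1-r$ --- exactly the ones that hypothesis (1) kills under $\psi$.

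Once the extension to $\IC[Z]$ is in place, the reduction $\psi(\theta_Z) \bmod Z$ is a shear $(X + P(Y), Y)$. By hypothesis (2), the coefficient of $Y^{d+e-r}$ for $0 \leq r \leq e-1$ equals $\psi(v_{r, d+e-2-r}) = c_r$, supplying the top $e$ coefficients of $P$. The remaining coefficients $c_e,\ldots,c_{d+e}$ of $Y^d, \ldots, 1$ should arise from free parameters built into the construction (for instance, affine or low-degree triangular factors that affect only the lower-$Y$-degree part without disturbing the top); pinning this down is a routine but non-trivial verification. The Valuation Criterion then yields the main conclusion.

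For the moreover statement, the condition $\psi(u_{0,e-1}) \neq 0$ ensures that the $Y^{e+1}$-term of the $e+1$-degree triangular in $\theta_Z$ survives, so this triangular factors as a product of elements in $K_j(\IC(Z))$ for $j$ in a subset of $\{2, \ldots, e+1\}$; together with the $H_1$-element supplied by the affine connector, $\theta_Z$ lives in $\langle K_f, H_1, K_{e+1}\rangle(\IC(Z))$ for some $2 \leq f \leq d$ (the upper bound $f \leq d$ enforced by the polydegree being $(d,e+1)$), and the Valuation Criterion applied to this subgroup delivers the stronger containment. I expect the main obstacle to be the bookkeeping in the second step: identifying exactly which negative-$Z$-power terms in the $X$-component of $\theta_Z$ correspond to which $v_{i,j}$. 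The structural input from the preceding lemma --- that $U(I(Y,Z),Z) \in (Y^2)$ and has $(1,-1)$-degree $2$ --- is precisely what keeps this matching tractable, since it forces each pole contribution to align in the bigraded decomposition with a unique $v_{i,j}$.
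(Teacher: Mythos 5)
Your proposal follows the paper's proof essentially step for step: take $\sigma = \tau_1\alpha\tau_2 \in \G(\IC(Z))$ with $\tau_2 = (X + U(Y,Z)/Z^{d+e-2},Y)$, the $H_1$-connector $\alpha = (X, Y+Z^{d+e-1}X)$, and a degree-$\leq d$ triangular $\tau_1$ carrying a matching pole plus the free addend $T_0(Y)=\sum_{r\geq e}c_rY^{d+e-r}$; use $Y=I(Y+ZU,Z)$ to convert the pole bookkeeping into the coefficients $v_{i,j}$ of $U(I(Y,Z),Z)$, so hypothesis (a) kills exactly the bad poles, hypothesis (b) supplies the top $e$ coefficients, and the Valuation Criterion finishes. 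The one point to tighten: $\tau_1$ cannot be built from $U$ itself (a pure conjugate $\tau_2^{-1}\alpha\tau_2$ would give polydegree $(e+1,e+1)$, not $(\leq d, e+1)$, and its pole would not cancel); the paper instead sets $\tau_1 = (X - V(Y,Z)/Z^{d+e-2} + T_0(Y),Y)$ where $V$ is the truncation of $U(I(Y,Z),Z)$ to $Y$-degree $\leq d$ and $Z$-order $\leq d+e-2$, which is precisely what makes $\deg_Y\tau_1 \leq d$ and reduces polynomiality of the $X$-component to the vanishing of the $v_{r,d-1+i}$ named in hypothesis (a).
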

\begin{proof}
Let $\psi: R_{d+e} \rightarrow \IC$ be a specialization homomorphism of the hypothesized form; for notational convenience, for the remainder of the proof we will identify $u_{i,j}$ and $v_{i,j}$ with their images under this specialization homomorphism (and write, for instance, $v_{0,d+e-2}=c_0$).
We begin by defining
\begin{align*}
T(Y) = \sum _{r=0} ^{d+e} c_r Y^{d+e-r}& &\text{and}& & \theta = (X+T(Y),Y).  
\end{align*}
The key tool we use is Theorem \ref{valuativeCriterion}.  We construct $\sigma \in \G(\IC(Z))$ such that $\sigma$ has polydegree $(f,g)$ for some $f \leq d$ and $g \leq e+1$, and (when $u_{0,e-1} \neq 0$) is a product of elements of $K_f$,$H_1$, and $K_{e+1}$ (with coefficients in $\IC(Z)$ rather than $\IC$).   Moreover, we will show that the components of $\sigma$ lie in $\IC[Z][X,Y]$; then, letting $\bar{\sigma}$ denote the image of $\sigma$ modulo $Z$, we will show $\bar{\sigma}=\theta$, which by Theorem \ref{valuativeCriterion} implies $\theta \in  \overline{\mathcal{G}_{(f,g)}} \subset \overline{\G_{(d,e+1)}}$ (with the last containment following from a result of Furter \cite{Furter09}), and in the case $u_{0,e-1} \neq 0$, that $\theta \in  \overline{\langle K_f,H_1,K_{e+1} \rangle}$ as well.

We define $V(Y,Z) \in \IC[Y,Z]$ to be a truncation of (the specialization of) the power series $U(I(Y,Z),Z)$, namely 
\begin{equation} \label{eq:V}
V(Y,Z) =  \sum _{i=0} ^{d+e-2} \sum _{j=0} ^{d-2} v_{i,j} Y^{j+2}Z^{i+j}.
\end{equation}

Note that $\deg _{(1,-1)} V(Y,Z) \leq 2$ and $\deg _Y V(Y,Z) \leq d$.
Set $T_0(Y) = \sum _{r=e} ^{d+e} c_r Y^{d+e-r}$, and define $\tau _1, \tau _2 \in \B(\IC(Z))$ and $\alpha \in \A\left(\IC[Z])\right)$ by 
\begin{align}
\tau _1 &= \left( X-\frac{V(Y,Z)}{Z^{d+e-2}}+T_0(Y),Y\right), & \alpha &= \left(X,Y+Z^{d+e-1}X\right), & \tau _2 &= \left(X+\frac{U(Y,Z)}{Z^{d+e-2}},Y \right). \label{eq:tausigma}
\end{align}

Let $$f=\deg_Y \left( \frac{-V(Y,Z)}{Z^{d+e-2}}+T_0(Y)\right)$$ and $g=\deg _Y U(Y,Z)$. Note that $f \leq d$ by construction, and unless $V(Y,Z)=0$, then $2 \leq f$ (we will see momentarily that $V(Y,Z) \neq 0$).   Moreover, if $u_{0,e-1} \neq 0$ then $g=e+1$.

Clearly $\sigma := \tau _1 \alpha \tau _2 \in \G_{(f,g)}\left(\IC(Z)\right)$.  
A straightforward computation gives, for some $\tilde{T} \in \IC[Z][X,Y],$
\begin{equation}
\sigma = \left(X+\frac{U(Y,Z)-V(Y+ZU,Z)}{Z^{d+e-2}}+T_0(Y)+Z\tilde{T}, Y+ZU(Y,Z)+Z^{d+e-1}X \right).\label{eq:sigma}
\end{equation}

Let us define $W(Y,Z)=U(Y,Z)-V(Y+ZU,Z)$.  If $W \in (Z^{d+e-2})$, then both components of $\sigma$ lie in $\IC[Z][X,Y]$.  Then, letting $\bar{\sigma}$ be the image of $\sigma$ after going modulo $Z$, showing $\bar{\sigma}=\theta$ will complete the proof.  Note that showing $W \in (Z^{d+e-2})$ will also give the above assertion that $V(Y,Z) \neq 0$.

Thus our remaining task is to show  $W \in (Z^{d+e-2})$ and compute $W$ modulo $Z^{d+e-1}$.   
Set $W_0(Y,Z)=U(I(Y,Z),Z)-V(Y,Z)$.  Then from \eqref{eq:V}, together with the hypothesis on the specialization homomorphism that $v_{i,j}=0$ when $i+j<d+e-2$, we see that, for some $W_1(Y,Z) \in R_{d+e}[Y,Z]$,
\begin{align*}
W_0(Y,Z)&= Z^{d+e-2} \sum _{i=0} ^{e-1}  v_{i,d+e-2-i} Y^{d+e-i} +Z^{d+e-3}W_1(Y,Z) \\
&= Z^{d+e-2} \sum _{i=0} ^{e-1}  c_i Y^{d+e-i} +Z^{d+e-3}W_1(Y,Z).
\end{align*}
Now, we observe
$$W(Y,Z)=U(Y,Z)-V(Y+ZU,Z)=U(I(Y+ZU,Z),Z)-V(Y+ZU,Z)=W_0(Y+ZU,Z).$$
Then clearly we have $W(Y,Z) \in (Z^{d+e-2})$ and 
$$W(Y,Z)+Z^{d+e-2}T_0(Y) \equiv \sum _{r=0}^{d+e} c_{r}Y^{d+e-r} Z^{d+e-2} \pmod{Z^{d+e-1}}.$$
Combining this with \eqref{eq:sigma}, we have $\overline{\sigma}=(X+T(Y),Y)=\theta$ as required.
 \end{proof}

\subsection{Proof of Theorem \ref{thm:partialSpecialization}}\label{sec:t5}
Next, we use Theorem \ref{thm:specialization} to prove Theorem \ref{thm:partialSpecialization}.  The bulk of the work is done in Lemma \ref{lem:vij} below.  The following theorem is well known in various forms; a  similar use appears in \cite{EdoArno}.

\begin{theorem}\cite[(4.5.12)]{morseFeshbach} \label{thm:LagrangeInversion}
Let $R$ be a $\IQ$-algebra, and let \[f(Y) = Y+\sum _{j=1} ^\infty f_j Y^{j+1} \in R[[Y]]\] have formal (compositional) inverse \[h(Y)=Y+\sum _{j=1} ^\infty h_j Y^{j+1} \in R[[Y]].\]  
Then 
\[h_j = \frac{1}{j+1} \sum _{{\bf a} \cdot \IN = j} (-1)^{|{\bf a}|} \binom{|{\bf a}|+j}{{\bf a}, j} f_1 ^{a_1} \cdots f_{j} ^{a_j}.\]
\end{theorem}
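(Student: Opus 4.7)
The plan is to reduce the identity to the residue form of the Lagrange inversion theorem and then extract the coefficient explicitly. Concretely, the residue form asserts that for $f(Y) = Y + O(Y^2)$ in $R[[Y]]$ with compositional inverse $h(Y)$,
\[
[Y^n] h(Y) = \frac{1}{n} [Y^{n-1}]\left(\frac{Y}{f(Y)}\right)^n.
\]
Setting $n = j+1$ gives $h_j = \frac{1}{j+1}[Y^j](Y/f(Y))^{j+1}$, so the task reduces to computing this single coefficient.

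Since $f(Y)/Y = 1 + \sum_{k \geq 1} f_k Y^k$, I would apply the generalized binomial series (valid because $R$ is a $\IQ$-algebra) to obtain
\[
\left(\frac{Y}{f(Y)}\right)^{j+1} = \sum_{m \geq 0} (-1)^m \binom{j+m}{m}\left(\sum_{k \geq 1} f_k Y^k\right)^m,
\]
and then expand the $m$-th power via the multinomial theorem, indexing the terms by tuples $\mathbf{a} = (a_1, a_2, \ldots)$ with $|\mathbf{a}| = m$. Each such $\mathbf{a}$ contributes $\binom{m}{\mathbf{a}} f_1^{a_1} f_2^{a_2} \cdots Y^{\mathbf{a} \cdot \IN}$. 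Extracting $[Y^j]$ restricts the sum to those $\mathbf{a}$ with $\mathbf{a} \cdot \IN = j$, which in particular forces $a_k = 0$ for $k > j$. Collapsing the binomial and multinomial coefficients via the identity
\[
\binom{j+m}{m}\binom{m}{\mathbf{a}} = \binom{|\mathbf{a}|+j}{\mathbf{a},\, j}
\]
then yields exactly the sum in the statement, once we divide through by $j+1$.

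The main obstacle, and really the only nontrivial step, is establishing the residue form of Lagrange inversion over an arbitrary $\IQ$-algebra $R$. Since every $h_j$ is a universal polynomial in the $f_k$ with rational coefficients, it suffices to verify the identity in the universal ring $\IQ[f_1, f_2, \ldots][[Y]]$. There, one can derive it by a standard formal-residue computation: differentiate $f(h(Y)) = Y$ to obtain $h'(Y) f'(h(Y)) = 1$, and then read off $n[Y^n]h(Y) = [Y^{n-1}] h'(Y)$ via a change of variables $Y \leftrightarrow f(Z)$ in a formal Laurent series residue. Once this residue identity is secured, the remaining combinatorics is entirely elementary, which is why the result is essentially a citation to Morse--Feshbach rather than an extended argument.
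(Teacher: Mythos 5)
The paper itself does not prove this theorem; it is stated as a citation to Morse--Feshbach (4.5.12) and used as a black box, so there is no internal proof to compare against. Your derivation is correct as a self-contained argument: the residue form $[Y^n]h(Y) = \frac{1}{n}[Y^{n-1}](Y/f(Y))^n$ applied at $n = j+1$, followed by the generalized binomial expansion of $(1 + \sum_{k\geq 1} f_k Y^k)^{-(j+1)}$, the multinomial expansion, and the coefficient collapse via
\[
\binom{j+|\mathbf{a}|}{|\mathbf{a}|}\binom{|\mathbf{a}|}{\mathbf{a}} = \frac{(j+|\mathbf{a}|)!}{j!\,a_1!\,a_2!\cdots} = \binom{|\mathbf{a}|+j}{\mathbf{a},\,j}
\]
gives exactly the stated formula. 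The reduction to the universal ring $\IQ[f_1, f_2, \ldots]$ is the right way to handle an arbitrary $\IQ$-algebra $R$: each $h_j$ is determined by a recursion in $f_1,\ldots,f_j$ with rational coefficients, so it suffices to establish the identity in the universal case, where the change-of-variables residue argument is valid since the ring is an integral domain and $f'(0)=1$ is a unit. One small presentational caveat: the sentence deriving the residue form is slightly telescoped (the identity $n[Y^n]h = [Y^{n-1}]h'$ is immediate by differentiation, and the real work is the substitution $Y = f(Z)$ in $\operatorname{res}_Y h'(Y)/Y^n$ using $h'(f(Z))f'(Z) = 1$), but the intended argument is clear and standard, so this is a matter of exposition rather than a gap.
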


\begin{lemma} \label{lem:vij}
Let  $j \geq 0$.  Then
\begin{enumerate}
\item \[ v_{0,j}= -\frac{1}{j+2} \sum _{ {\bf a} \cdot \IN = j+1} (-1)^{|{\bf a}|} \binom{|{\bf a}|+j+1}{{\bf a}, j+1} u_{0,0} ^{a_1} \cdots u_{0,e-1} ^{a_{e-1}}. \]
\item If $i \geq 1$, there exist $p_{i,j} \in R_{i-1}$ such that
\[v_{i,j} = p_{i,j} + \sum _{s=0} ^{e-1} \alpha _{s,j,e} u_{i,s}, \]
where $\alpha _{s,j,e}$ is as defined in \eqref{eq:alpha}.
\end{enumerate}
\end{lemma}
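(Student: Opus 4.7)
The plan is to exploit the $(1,-1)$-grading introduced above, under which $V_iZ^i$ is precisely the weight-$(2-i)$ homogeneous component of $U(I(Y,Z),Z)$. Both parts proceed by comparing top-weight pieces.

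For part (a), I would first identify $V_0$ with the top (weight $2$) component of $U(I,Z)$. Since $\deg_{(1,-1)}(U_k(I,Z)Z^k)\leq 2-k$, only $k=0$ contributes to weight $2$, and the weight-$2$ part of $U_0(I,Z) = \sum_{j=0}^{e-1} u_{0,j}I^{j+2}Z^j$ is obtained by replacing $I$ with its top (weight-$1$) component $\tilde{I}$. Taking the weight-$1$ part of the identity $I+ZU(I,Z)=Y$ then shows that $\tilde{I}$ is the formal compositional inverse in $Y$ of $Y+ZU_0(Y,Z)=Y+\sum_{m=1}^{e}u_{0,m-1}Z^mY^{m+1}$, so Theorem \ref{thm:LagrangeInversion} (with $f_m=u_{0,m-1}Z^m$ for $1\leq m\leq e$ and $f_m=0$ otherwise) produces $\tilde{I}$ explicitly. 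The identity $V_0=U_0(\tilde{I},Z)=(Y-\tilde{I})/Z$ then reads off $v_{0,j}$ as $-1$ times the coefficient of $Y^{j+2}Z^{j+1}$ in $\tilde{I}$, and the Lagrange expression collapses to the stated formula (tuples of length $j+1$ restrict to tuples of length $e$ since the remaining $f_m$ vanish).

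For part (b), my strategy is first to use the same grading to obtain affine-linearity of $v_{i,j}$ in the $u_{i,\ast}$, and then to extract the linear coefficient by implicit differentiation. Differentiating $I+ZU(I,Z)=Y$ with respect to $u_{m,s}$ yields
\[ \frac{\partial U(I,Z)}{\partial u_{m,s}} = \frac{I^{s+2}Z^{m+s}}{1+ZU'(I,Z)}, \]
whose top $(1,-1)$-weight is $2-m$. For $m>i$ this is strictly below the weight $2-i$ of $V_i$, so $\partial v_{i,j}/\partial u_{m,s}=0$ and $v_{i,j}\in R_i$. For $m=i$, the weight-$(2-i)$ part of the right-hand side equals $\tilde{I}^{s+2}Z^{s+i}\cdot\partial\tilde{I}/\partial Y$, using the inverse-function identity $(1+ZU_0'(\tilde{I},Z))^{-1}=\partial\tilde{I}/\partial Y$; since this depends only on $u_{0,\ast}$, $\partial v_{i,j}/\partial u_{i,s}$ is independent of the $u_{i,\ast}$. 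This simultaneously forces affine-linearity (no cross-terms) and places $p_{i,j}:=v_{i,j}|_{u_{i,\ast}=0}$ in $R_{i-1}$.

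What remains is to match this linear coefficient with $\alpha_{s,j,e}$. Using $\tilde{I}^{s+2}\partial\tilde{I}/\partial Y=\tfrac{1}{s+3}\partial(\tilde{I}^{s+3})/\partial Y$, the task reduces to computing $\tfrac{j+3}{s+3}[Y^{j+3}Z^{j-s}]\tilde{I}^{s+3}$. I would obtain this from the standard Lagrange--B\"urmann extension of Theorem \ref{thm:LagrangeInversion}: setting $\phi(w,Z)=w/(w+ZU_0(w,Z))$ gives $[Y^n]\tilde{I}^{s+3}=\tfrac{s+3}{n}[w^{n-s-3}]\phi(w,Z)^n$. Expanding $\phi^n=(1+ZU_0(w,Z)/w)^{-n}$ by the generalised binomial and multinomial series, and indexing by a multi-index $\mathbf{a}$ with $a_k$ tracking $u_{0,k-1}$ and $\mathbf{a}\cdot\IN=j-s$, produces precisely the expression for $\alpha_{s,j,e}(u_{0,0},\ldots,u_{0,e-1})$ from \eqref{eq:alpha}. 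The main obstacle is this last combinatorial bookkeeping; the conceptual ingredients (weight estimates, identification of top-weight pieces, and implicit differentiation) are formal once one recognises that the $(1,-1)$-grading cleanly separates the contributions of the various $u_{m,\ast}$ among the distinct $V_i$.
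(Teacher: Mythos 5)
Your proposal is correct in outline and takes a genuinely different route from the paper's proof, particularly for part (b). The paper applies Theorem \ref{thm:LagrangeInversion} directly to $Y+ZU(Y)$, reads off $v_{i,j}$ as the coefficient of $Z^{i+j+1}$ in $-h_{j+1}$, and for part (b) introduces a \emph{separate} degree grading on $R_i=R_{i-1}[u_{i,0},\dots,u_{i,e-1}]$ (not the $(1,-1)$-grading), computes the leading term, and concludes via the discrete binomial identity $a_s\binom{|\mathbf{a}|+j+1}{\mathbf{a},j+1}=(j+2)\binom{|\tilde{\mathbf{a}}|+j+2}{\tilde{\mathbf{a}},j+2}$. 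You instead exploit the $(1,-1)$-grading throughout: you show $\partial U(I,Z)/\partial u_{m,s}$ has top weight $2-m$ (giving both $v_{i,j}\in R_i$ and affine-linearity in $u_{i,\ast}$ in one stroke), identify the weight-0 part of $(1+ZU'(I,Z))^{-1}$ with $\partial\tilde I/\partial Y$ via the inverse-function theorem, and then evaluate the coefficient using the Lagrange--B\"urmann formula for $\tilde I^{s+3}$ rather than for $\tilde I$ itself. Your implicit-differentiation step is the smooth analogue of the paper's binomial shift. The ``combinatorial bookkeeping'' you defer at the end does go through: $\frac{j+3}{s+3}[Y^{j+3}Z^{j-s}]\tilde I^{s+3}=[w^{j-s}Z^{j-s}]\bigl(1+ZU_0(w,Z)/w\bigr)^{-(j+3)}$, and since $ZU_0(w,Z)/w=\sum_{k=1}^e u_{0,k-1}(wZ)^k$ is a power series in $wZ$, expanding by the negative binomial and multinomial theorems and using $\binom{j+2+|\mathbf{a}|}{|\mathbf{a}|}\binom{|\mathbf{a}|}{\mathbf{a}}=\binom{|\mathbf{a}|+j+2}{\mathbf{a},j+2}$ gives exactly $\alpha_{s,j,e}$. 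Your route is conceptually more structured (the grading does all the separation work, and it makes the appearance of $u_{0,\ast}$ alone transparent), at the cost of invoking the stronger Lagrange--B\"urmann statement for powers, which the paper avoids by citing only the plain compositional-inverse form of Lagrange inversion.
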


\begin{proof}
We apply Theorem \ref{thm:LagrangeInversion} to $Y+ZU(Y) \in R_{d+e}[Z][Y]$; recall that we defined $I(Y) \in R_{d+e}[Z][[Y]]$ to be its formal inverse.  
\begin{equation*}
  Y + Z U (Y) = Y +  \sum_{i = 0}^{d + e} \sum_{j = 0}^{e - 1} u_{i,
   j} Y^{j+2} Z^{i + j+1} = Y + \sum_{k = 1}^{e} Y^{k+1} \left(\sum_{i =
   0}^{d + e} u_{i, k-1} Z^{i + k} \right) .
\end{equation*}
  
Writing $I (Y) = Y + \sum _{j=1} ^\infty h_j Y^{j+1}$ for some $h_j \in R_{d+e}[Z]$, Theorem \ref{thm:LagrangeInversion} implies
\begin{equation*}
  h_m = \frac{1}{m + 1} \sum_{\boldsymbol{a} \cdot \mathbb{N}= m} (- 1)^{|
  \boldsymbol{a} |} \binom{| \boldsymbol{a} | + m}{\boldsymbol{a}, m} f_1^{a_1} \cdots f_m ^{a_m},
\end{equation*}
where $f_n \in R_{d+e}[Z]$ is the coefficient of $Y^{n+1}$ in $ZU(Y)$, namely $f_n =   \sum_{i = 0}^{d + e} u_{i, n - 1} Z^{i+n}.$  Factoring out the $Z^n$ from each $f_n$, we can write $F_n = \sum_{i = 0}^{d + e} u_{i, n - 1} Z^{i}$, and then we have 
\begin{equation}
  h_m = \frac{1}{m + 1}Z^m \sum_{\boldsymbol{a} \cdot \mathbb{N}= m} (- 1)^{|
  \boldsymbol{a} |} \binom{| \boldsymbol{a} | + m}{\boldsymbol{a}, m} F_1^{a_1} \cdots F_m ^{a_m}.
   \label{eq:hm}
\end{equation}
Since $I (Y) + Z U (I (Y)) = Y$, we have
\begin{equation}\label{eq:Uh}
  U (I (Y)) = Z^{-1}\left(Y - I (Y)\right) = -\sum _{j=1} ^\infty h_j Y^{j+1} Z^{-1}.
\end{equation}

Now, for the first part, recall from \eqref{eq:Vi} that $v_{0,j}$ is the coefficient of $Y^{j+2}Z^j$ in $U(I(Y))$.  Then from \eqref{eq:Uh}, we see that $v_{0,j}$ is the coefficient of $Z^{j+1}$ in $-h_{j+1}$, so we deduce from \eqref{eq:hm} that
$$v_{0,j} = -\frac{1}{j+2} \sum _{{\bf a} \cdot \IN = j+1} (-1)^{|{\bf a}|} \binom{|{\bf a}|+j+1}{{\bf a}, j+1} u_{0,0}^{a_1} \cdots u_{0,e-1} ^{a_e}.$$

For the second part, we assume $i \geq 1$.  We proceed similarly to the first part but, to simplify some computations, we consider the natural degree grading on $R_{i}=R_{i-1}[u_{i,0},\dots,u_{i,e-1}]$; that is, elements of $R_{i-1}$ have degree zero, while the variables $u_{i,j}$ have degree 1.   We will compute $\overline{v_{i,j}}$, the leading term of $v_{i,j}$ in this grading.

  Recalling from \eqref{eq:Vi} that $v_{i,j}$ is the coefficient of $Y^{j+2}Z^{i+j}$ in $U(I(Y))$, we  see from \eqref{eq:Uh} that $v_{i,j}$ is the coefficient of $Z^{i+j+1}$ in $-h_{j+1}$, and thus, from \eqref{eq:hm},
\begin{align*}
\overline{v_{i,j}} &= -\frac{1}{j+2} \sum _{s=1} ^e u_{i,s-1} \sum _{\substack{{\bf a} \cdot \IN = j+1 \\ a_s > 0}} (-1)^{|{\bf a}|} \binom{|{\bf a}|+j+1}{{\bf a},j+1} \frac{u_{0,0}^{a_1}\cdots u_{0,e-1}^{a_e}}{u_{0,s-1}}. 
\end{align*}

  Since
  \begin{equation*}
    a_s \binom{| \boldsymbol{a} | + j + 1}{\boldsymbol{a}, j + 1} = (j + 2)
     \binom{| \tilde{\boldsymbol{a}} | + j + 2}{\tilde{\boldsymbol{a}}, j + 2},
  \end{equation*}
  where $\tilde{\boldsymbol{a}}$ is obtained from $\boldsymbol{a}$ by replacing
  $a_s$ with $a_s - 1$, we find
  \begin{eqnarray*}
    \overline{v_{i, j}} & = &  \sum_{s = 1}^e u_{i, s - 1} \sum_{\boldsymbol{a}
    \cdot \mathbb{N}= j + 1 - s} (- 1)^{| \boldsymbol{a} |} \binom{|
    \boldsymbol{a} | + j + 2}{\boldsymbol{a}, j + 2} u_{0, 0}^{a_1} u_{0, 1}^{a_2}
    \cdots u_{0, e - 1}^{a_e}\\
    & = &  \sum_{s = 0}^{e - 1} u_{i, s} \sum_{\boldsymbol{a} \cdot
    \mathbb{N}= j - s} (- 1)^{| \boldsymbol{a} |} \binom{| \boldsymbol{a} | + j +
    2}{\boldsymbol{a}, j + 2} u_{0, 0}^{a_1} u_{0, 1}^{a_2} \cdots u_{0, e -
    1}^{a_e}\\
    & = &  \sum_{s = 0}^{e - 1} u_{i, s} \alpha_{s, j,e}.
  \end{eqnarray*}
 Thus we see $v_{i,j}$ has degree one, so $v_{i,j} = p_{i,j} +  \sum_{s = 0}^{e - 1} u_{i, s} \alpha_{s, j,e}$ for some $p_{i,j}$ of degree zero, i.e. $p_{i,j} \in  R_{i-1}$,  as claimed.

\end{proof}

\begin{remark}\label{remark:vg}
We note that the first part of Lemma \ref{lem:vij} is stating that $v_{0,k}$ and $g_{k+1,e}$ are the same polynomials after a change of variables.
\end{remark}

\begin{proof}[Proof of Theorem \ref{thm:partialSpecialization}]
Let $c_0 \in \IC^*$ and $c_1,\ldots,c_{d+e} \in \IC$.  Note that, by Remark \ref{remark:vg}, the hypotheses are that there exist a specialization homomorphism $\psi _0: R_0 \rightarrow \IC$ such that 
\begin{enumerate}
\item $\psi _0 (v_{0,d-1+i}) =0$ for each $0 \leq i \leq e-2$;
\item $\psi _0 (v_{0,d+e-2})=c_0$; and
\item $\psi _0 (a_{d,e}) \neq 0$.  
\end{enumerate}

From Lemma \ref{lem:vij}, since $\psi _0 (a_{d,e}) \neq 0$, we have for each $r \geq 1$ that 
$$R_r=R_{r-1}[v_{r,d-1},\ldots,v_{r,d+e-2}].$$

Starting from $\psi _0$, we can thus inductively construct a $\IC$-algebra map $\psi _r : R_{r} \rightarrow \IC$ for each $1 \leq r \leq d+e$ satisfying $\psi _r |_{R_{r-1}} = \psi _{r-1}$,
\begin{align*}
\psi _r (v_{r,d-1 +i }) = 0 \quad\text{for each $0 \leq i <e-1-r$}
\end{align*}
and $\psi _r (v_{r,d+e-2-r}) = c_{r}$.
The map $\psi _{d+e}$ satisfies the hypotheses of Theorem \ref{thm:specialization}, completing the proof of Theorem \ref{thm:partialSpecialization}.
\end{proof}

\section{Applications to the Polydegree Conjecture}\label{sec:applications}
In this section, we show how our main theorem implies various results related to the Polydegree Conjecture.  In particular, we first show how we very quickly obtain Edo's result (Theorem \ref{thm:edo}), and then also give a short proof of Furter's result (Theorem \ref{thm:furter}).  Finally, we address a question of Arzhantsev that arises naturally in \cite{ZAK}.  

\subsection{Computer-aided results}\label{sec:computer}
We note that, by virtue of Theorem \ref{thm:ICDC}, it is easy to establish
\begin{theorem}
The Polydegree Conjecture is algorithmically decidable for fixed $d$ and $e$.
\end{theorem}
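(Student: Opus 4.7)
The plan is to reduce the Polydegree Conjecture for fixed $(d,e)$ to a single containment question that is decidable by Gröbner basis elimination, rather than relying on the one-sided sufficient condition of Theorem~\ref{thm:ICDC}. The asserted equality
$\overline{\G_{(d,e)}} = \coprod_{(d',e')<(d,e)} \G_{(d',e')} \cup \coprod_{f<d+e} \G_{(f)}$
splits into two directions. The reverse direction (that no extra polydegree strata appear in $\overline{\G_{(d,e)}}$) follows from the Friedland--Milnor dimension formula together with Furter's length bounds. Among the forward containments, the length-two cases $\G_{(d',e')} \subseteq \overline{\G_{(d,e)}}$ are already settled by Furter's iff criterion for same-length containments, and the length-one cases $\G_{(f)} \subseteq \overline{\G_{(d,e)}}$ for $f < d+e-1$ all follow from the single case $f = d+e-1$ via transitivity through $\overline{\G_{(d+e-1)}}$ (using $\G_{(f)} \subseteq \overline{\G_{(d+e-1)}}$ whenever $f \leq d+e-1$). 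So the Polydegree Conjecture for fixed $(d,e)$ is equivalent to the single statement $\G_{(d+e-1)} \subseteq \overline{\G_{(d,e)}}$.

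To decide this one containment algorithmically, I would realize both strata as images of explicit polynomial morphisms into a common finite-dimensional ambient space. Any automorphism of polydegree $(d+e-1)$ or $(d,e)$ has bidegree bounded by an explicit function of $d+e$, so its coefficient vector lies in $\IC^N$ for a fixed $N$. The Jung--van der Kulk parameter spaces $Q$ and $P$ (collecting the free coefficients of a factorization $\alpha_0 \tau_1 \alpha_1$ for $\G_{(d+e-1)}$ and $\alpha_0 \tau_1 \alpha_1 \tau_2 \alpha_2$ for $\G_{(d,e)}$) are themselves affine spaces, and the composition maps $\psi : Q \to \IC^N$ and $\phi : P \to \IC^N$ are polynomial; by Chevalley's theorem their images are constructible. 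The vanishing ideal $J \subseteq \IC[\xi_1, \ldots, \xi_N]$ of $\overline{\G_{(d,e)}}$ is then exactly $\ker(\phi^*)$, which can be computed by a standard Gröbner basis elimination of the $P$-variables from the graph ideal of $\phi$.

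With a generating set of $J$ in hand, the containment $\G_{(d+e-1)} \subseteq \overline{\G_{(d,e)}}$ is equivalent to $\psi^*(J) = 0$ in $\IC[Q]$: pull back each generator of $J$ along $\psi$ and test whether each resulting element of $\IC[Q]$ is the zero polynomial. Since $\psi^*$ is a concrete polynomial substitution and the zero-test for polynomials is effective, this yields a genuine two-sided decision procedure, returning TRUE exactly when the conjecture holds and FALSE exactly when it fails.

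The main obstacle is computational rather than logical. Gröbner basis elimination has doubly-exponential worst-case complexity, and both $N$ and $\dim P$ grow rapidly with $d+e$, so the direct procedure above is only theoretically feasible and is impractical for any but the smallest parameters. This is precisely why the authors pair it with the much smaller ideal condition $PIC(d,e-1)$ of Theorem~\ref{thm:ICDC}, which is sufficient but not known to be necessary, and use that smaller condition for the explicit computer-aided verifications reported in Section~\ref{sec:computer}.
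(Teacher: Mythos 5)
Your argument is correct, but it takes a genuinely different route from the paper's, whose proof is a two-line observation: since the polynomials $g_{k,e}$ have no constant term, $\mathrm{rad}(g_{d,e},\ldots,g_{d+e-1,e})$ is maximal if and only if it equals $(x_1,\ldots,x_e)$, so $PIC(d,e)$ amounts to finitely many radical-membership tests in $\IQ[x_1,\ldots,x_e]$, each decidable by Gr\"obner bases, and decidability is then inferred via Theorem~\ref{thm:ICDC}. Strictly speaking, that route only produces a procedure that can \emph{confirm} the containment when $PIC$ holds, since $PIC$ is sufficient but not known to be necessary; your implicitization scheme --- compute $J=\ker(\phi^*)$ by eliminating the parameter variables from the graph ideal, then test $\psi^*(J)=0$ --- decides the containment $\G_{(d+e-1)}\subseteq\overline{\G_{(d,e)}}$ in both directions, a stronger form of decidability, at the cost of eliminating from a ring whose dimension grows with $de$ rather than with $e$. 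Your preliminary reduction to this single containment (the reverse inclusion and the equal-length cases from Friedland--Milnor and Furter, the remaining length-one cases by transitivity through $\overline{\G_{(d+e-1)}}$ using Furter's criterion $\G_{(f)}\subseteq\overline{\G_{(f')}}$ for $f\le f'$) matches how the paper itself frames what remains open in length two. Two minor points: the Jung--van der Kulk parameter spaces are dense open subsets of affine spaces (leading coefficients and the determinants of the affine factors must be nonzero) rather than affine spaces themselves, which does not affect the computed closures since they are dense in irreducible varieties; and all maps involved are defined over $\IQ$, so the Gr\"obner computations and zero tests are indeed effective.
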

Indeed, since the polynomials $g_{d,e}$ are of positive order, if the ideal ${\rm rad}(g_{d,e}, \ldots, g_{d+e-1,e}$) is maximal it must be the ideal $(x_1,\ldots,x_e)$.  Thus $PIC(d,e)$ is an ideal membership question, giving the theorem above.

By computing Gr\"obner bases, we can verify whether $PIC(d,e)$ holds for particular $d$ and $e$.  The following lemma is useful in this regard:
\begin{lemma}\label{lem:computation}
Let $d,e \geq 2$.  Suppose $PIC(d,e-1)$ is true, and
\begin{enumerate}
\item\label{i:lem:computation:1} $x_e \in {\rm rad}(g_{d,e},\ldots,g_{d+e-1,e})$,
\item $x_e \in {\rm rad}(g_{d,e},\ldots,g_{d+e-2,e},a_{d,e})$.
\end{enumerate}
Then $PIC(d,e)$ is true as well.
\end{lemma}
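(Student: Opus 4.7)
The plan is to derive $PIC(d,e)$ from $PIC(d,e-1)$ by specializing $x_e = 0$.  The arithmetic engine is the identity $g_{d+i,e}|_{x_e=0} = g_{d+i,e-1}$ for every $i \geq 0$, which is immediate from the multinomial-sum definition, since setting $x_e = 0$ kills every term with $a_e > 0$ and the remaining constraint ${\bf a}\cdot\IN = d+i$ (with $a_e = 0$) is exactly the defining condition for $g_{d+i,e-1}$.  Thus both hypotheses on $x_e$ descend, under the quotient $\IQ[x_1,\ldots,x_e]/(x_e) \cong \IQ[x_1,\ldots,x_{e-1}]$, to statements to which $PIC(d,e-1)$ applies.

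For the maximality half of $PIC(d,e)$, I would write $J_e := {\rm rad}(g_{d,e},\ldots,g_{d+e-1,e})$.  Since every generator has zero constant term, $J_e \subseteq (x_1,\ldots,x_e)$.  Hypothesis~\ref{i:lem:computation:1} puts $x_e \in J_e$, so the image of $J_e$ in $\IQ[x_1,\ldots,x_{e-1}]$ is a radical ideal containing $g_{d,e-1},\ldots,g_{d+e-1,e-1}$.  By $PIC(d,e-1)$, the first $e-1$ of these already have radical equal to the maximal ideal $(x_1,\ldots,x_{e-1})$, so pulling back gives $J_e = (x_1,\ldots,x_e)$.

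For the non-membership of $a_{d,e}$, I would argue by contradiction.  Suppose $a_{d,e} \in I_e := {\rm rad}(g_{d,e},\ldots,g_{d+e-2,e})$.  Then ${\rm rad}(g_{d,e},\ldots,g_{d+e-2,e},a_{d,e}) = I_e$, so hypothesis~(b) forces $x_e \in I_e$, and the same mod-$x_e$ reduction---now using only the $e-1$ generators $g_{d,e-1},\ldots,g_{d+e-2,e-1}$, whose radical is still $(x_1,\ldots,x_{e-1})$ by $PIC(d,e-1)$---yields $I_e = (x_1,\ldots,x_e)$.  This is the contradiction: the height-$e$ maximal ideal $(x_1,\ldots,x_e)$ cannot be a minimal prime over the ideal $(g_{d,e},\ldots,g_{d+e-2,e})$, which has only $e-1$ generators, by Krull's height theorem.

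I do not anticipate any real obstacle.  The only substantive ingredients are the elementary specialization identity and Krull's height bound, both routine; everything else is a short diagram chase through the quotient by $x_e$.
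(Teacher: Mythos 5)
Your proof is correct and is essentially the paper's proof. Both rest on the same three ingredients: the specialization identity $g_{d+i,e}\big|_{x_e=0}=g_{d+i,e-1}$, the observation that hypotheses (a) and (b) let you add $x_e$ as a generator without changing the relevant radicals so that $PIC(d,e-1)$ can be applied after reducing mod $x_e$, and Krull's height theorem to rule out ${\rm rad}(g_{d,e},\ldots,g_{d+e-2,e})$ being maximal. The only cosmetic difference is that you phrase the last step as a proof by contradiction, while the paper states it directly as ``the one radical is maximal and the other is not.''
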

\begin{proof}
The key observation is that $g_{d,e} \equiv g_{d,e-1} \pmod {x_e}$.  Since $PIC(d,e-1)$ holds,
$${\rm rad}(g_{d,e-1},\ldots,g_{d+e-2,e-1},x_e)=(x_1,\ldots,x_e),$$
and hypothesis \ref{i:lem:computation:1} implies $$ {\rm rad}(g_{d,e},\ldots,g_{d+e-1,e}) = {\rm rad}(g_{d,e},\ldots,g_{d+e-1,e},x_e) = {\rm rad}(g_{d,e-1},\ldots,g_{d+e-1,e-1},x_e)= (x_1,\ldots,x_e).$$  Similarly,
\begin{align*}
{\rm rad}(g_{d,e},\ldots,g_{d+e-2,e},a_{d,e}) &\supset {\rm rad}(g_{d,e},\ldots,g_{d+e-2,e},x_e) \\
&= {\rm rad}(g_{d,e-1},\ldots,g_{d+e-2,e-1},x_e) = (x_1,\ldots,x_e).
\end{align*}
Thus ${\rm rad}(g_{d,e},\ldots,g_{d+e-2,e},a_{d,e})$ is maximal (in $\IQ^{[e]}$), and since  ${\rm rad}(g_{d,e},\ldots,g_{d+e-2,e})$ is not, we see $a_{d,e} \notin {\rm rad}(g_{d,e},\ldots,g_{d+e-2,e})$.
\end{proof}

We used Sage combined with Magma (and, independently, Mathematica; though not in all cases) to verify the two hypotheses of Lemma \ref{lem:computation} and establish the following results:
\begin{theorem}\mbox{}
\begin{enumerate}
\item If $2 \leq d < 50$, then $\G_{(d+3)} \subset \overline{G_{(d,4)}}$.
\item If $2 \leq d < 20$, then $\G_{(d+4)} \subset \overline{G_{(d,5)}}$.
\item If $2 \leq d \leq 12$, then  $\G_{(d+5)} \subset \overline{G_{(d,6)}}$.
\end{enumerate}
\end{theorem}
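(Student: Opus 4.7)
The plan is to reduce each of the three statements to a verification of $PIC(d,e)$ via Theorem \ref{thm:ICDC}: part (a) requires $PIC(d,3)$ for $2 \leq d < 50$, part (b) requires $PIC(d,4)$ for $2 \leq d < 20$, and part (c) requires $PIC(d,5)$ for $2 \leq d \leq 12$. Once each $PIC(d,e)$ is established, Theorem \ref{thm:ICDC} immediately yields the corresponding containment $\G_{(d+e)} \subset \overline{\G_{(d,e+1)}}$.

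To verify $PIC(d,e)$ I would proceed by induction on $e$ using Lemma \ref{lem:computation}. The base case $PIC(d,2)$ is an ideal membership question in the two-variable ring $\IQ[x_1,x_2]$ and can be checked directly by a small Gr\"obner basis computation for each $d$ in the relevant range. For the inductive step, assuming $PIC(d,e-1)$ is known, Lemma \ref{lem:computation} reduces $PIC(d,e)$ to two explicit radical ideal memberships in $\IQ[x_1,\ldots,x_e]$, namely
\begin{equation*}
x_e \in {\rm rad}(g_{d,e},\ldots,g_{d+e-1,e}) \quad \text{and} \quad x_e \in {\rm rad}(g_{d,e},\ldots,g_{d+e-2,e},a_{d,e}).
\end{equation*}
Each of these is decidable by a Rabinowitsch-style Gr\"obner basis computation (testing whether $1$ lies in the given ideal augmented by $1 - t x_e$ in $\IQ[x_1,\ldots,x_e,t]$), which I would carry out in Sage with Magma as the Gr\"obner backend, and independently verify a representative subset of cases in Mathematica to guard against transcription errors in the definitions of $g_{d,e}$, $\alpha_{i,j,e}$, and the $e \times e$ determinant $a_{d,e}$.

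The main obstacle is computational rather than conceptual. The polynomials $g_{d+i,e}$ are indexed by partitions of $d+i$ with parts bounded by $e$, and their coefficients are large multinomial numbers; moreover, $a_{d,e}$ is an $e \times e$ determinant whose entries are polynomials of this same type. Consequently the bit complexity and running time of the Gr\"obner basis computations grow rapidly with $d$ and especially with $e$, and this is exactly what forces the shrinking ranges $d < 50$, $d < 20$, and $d \leq 12$ as $e$ climbs from $3$ to $5$. To mitigate the cost I would work in a graded reverse lexicographic order, do preliminary probes modulo a large prime to discover the answer quickly, and only then run the certifying computation over $\IQ$. The mathematical content is entirely encoded in Theorem \ref{thm:ICDC} and Lemma \ref{lem:computation}; the theorem is proved as soon as the finite list of Gr\"obner basis certificates has been produced.
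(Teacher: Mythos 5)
Your proposal is correct and takes essentially the same route as the paper: reduce each containment to an instance of $PIC(d,e)$ via Theorem \ref{thm:ICDC}, discharge $PIC(d,e)$ inductively in $e$ through Lemma \ref{lem:computation}, and certify the two resulting radical membership conditions with Gr\"obner basis computations in Sage/Magma with an independent Mathematica check. The implementation details you add (Rabinowitsch trick, grevlex order, modular probing) are not spelled out in the paper but are standard and consistent with the authors' remark about the size of the Gr\"obner bases; the only small point worth flagging is that Theorem \ref{thm:furter} does not itself establish $PIC(d,2)$, so your plan to verify the $e=2$ base case directly for each $d$ in range is indeed necessary, as you correctly anticipate.
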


We note that these include the previously unknown special cases $\G_{(8)} \subset \overline{\G_{(5,4)}}$, $\G_{(10)} \subset \overline{\G_{(6,5)}}$, and $\G_{(12)} \subset \overline{\G_{(7,6)}}$ (among others).  These would be quite tedious to establish directly (with currently known techniques), as the Gr\"obner bases involved in the computations can consist of a large number of polynomials, depending on the term order chosen.

\subsection{Edo's Theorem}\label{sec:edo}
\begin{theorem}[Edo]\label{thm:edo} If $(d-1)=me$ for some $m \in \IN$ and $d,e \geq 2$, then $\mathcal{G}_{(d+e)} \subset \overline{\mathcal{G}_{(d,e+1)}}$.
\end{theorem}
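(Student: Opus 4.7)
The plan is to apply Theorem \ref{thm:partialSpecializationSomeC} with the specialization $\psi: \IC[x_1, \ldots, x_e] \to \IC$ sending $x_e \mapsto 1$ and $x_i \mapsto 0$ for $1 \leq i \leq e-1$. This choice is motivated by the observation that, via Remark \ref{remark:vg}, it corresponds to the ``leading'' monomial $U_0(Y) = Y^{e+1}$, whose compositional inverse can be read off from a short Lagrange-inversion calculation. I will verify the three hypotheses in turn.

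For hypotheses (1) and (2), note that at this specialization a monomial $x_1^{a_1} \cdots x_e^{a_e}$ survives iff $a_1 = \cdots = a_{e-1} = 0$. The constraint $\mathbf{a} \cdot \IN = k$ in the sum defining $g_{k,e}$ then reduces to $e a_e = k$, so $\psi(g_{k,e}) \neq 0$ precisely when $e \mid k$, in which case the value is a signed Fuss--Catalan-type binomial. Under the hypothesis $d - 1 = me$, the unique multiple of $e$ in $\{d, d+1, \ldots, d+e-1\}$ is $d + e - 1 = (m+1)e$, so $\psi(g_{d+i,e}) = 0$ for $0 \leq i \leq e-2$ while $\psi(g_{d+e-1,e}) \neq 0$, as required.

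For hypothesis (3), the same observation applied to $\alpha_{i,j,e}$ shows that the $(i,j)$-entry survives $\psi$ only when $e \mid (j-i)$. With $i \in \{0, \ldots, e-1\}$ and $j \in \{me, me+1, \ldots, me+e-1\}$ this forces $j = me + i$, so after reindexing rows/columns the matrix $(\psi(\alpha_{i,j,e}))$ is diagonal with entries $(-1)^m \binom{m+j+2}{m} \neq 0$; hence $\psi(a_{d,e}) \neq 0$. Invoking Theorem \ref{thm:partialSpecializationSomeC} then yields the desired containment.

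I do not expect a genuine obstacle here. The elegance of the argument lies entirely in the fact that the single-point specialization $x_e = 1$, $x_i = 0$ (for $i < e$) simultaneously collapses the $g_{k,e}$ to (signed) Fuss--Catalan numbers and the defining matrix of $a_{d,e}$ to a diagonal one, and the arithmetic condition $e \mid (d-1)$ is exactly what aligns the unique surviving $g$-value with the last of the $e$ consecutive indices $d, d+1, \ldots, d+e-1$. The only mild subtlety is the bookkeeping in the matrix step, which is essentially reading off divisibility mod $e$ of the entries of a shifted interval.
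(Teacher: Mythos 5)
Your proof is correct and takes essentially the same route as the paper: the paper's specialization likewise sends $x_1,\ldots,x_{e-1}\mapsto 0$, exploits $d+e-1=(m+1)e$ to isolate which $g_{k,e}$ survive, and reduces $a_{d,e}$ to a diagonal determinant whose nonvanishing is immediate. The only difference is that the paper tunes $x_e$ to a carefully chosen $(m+1)$-th root and invokes Theorem \ref{thm:partialSpecialization} (rather than Theorem \ref{thm:partialSpecializationSomeC} with $x_e\mapsto 1$) so as to also obtain the stronger conclusion $(X+\sum c_r Y^{d+e-r},Y)\in\overline{\langle K_d,H_1,K_{e+1}\rangle}$, which is needed later in Section \ref{sec:Arzhantsev}.
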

\begin{proof}
Rather than use Theorem \ref{thm:ICDC}, we find it easier to directly construct the specialization homomorphisms.
Let $c_0,\ldots,c_{d+e} \in \IC$ with $c_0 \neq 0$.  
We make use of Theorem \ref{thm:partialSpecialization} and define a specialization homomorphism $\psi : \IC[x_1,\ldots,x_e] \rightarrow \IC$ by $$\psi(x_j) = \begin{cases} 0, & \text{if $0 \leq j <e$,} \\ \left( \frac{ (-1)^{m+1} c_0}{\binom{d+e+m}{m+1}} \right)^{\frac{1}{m+1}}, & \text{if $j=e$.}\end{cases}$$

Then $\psi(g_{d,e})=\cdots=\psi(g_{d+e-2,e})=0$, while $\psi(g_{d+e-1,e})=c_0$.  Note that these choices mean $\psi(\alpha _{i,j,e})=0$ whenever $j-i$ is not a multiple of $e$.  Thus, to see $\psi(a_{d,e}) \neq 0$, it suffices to show that $\psi(\alpha _{i,i+ne,e}) \neq 0$ for all $n \in \IN$. But this is immediate from the definition \eqref{eq:alpha}.
\end{proof}

\begin{remark}
Edo's original proof involved showing that certain hypergeometric polynomials did not share a common root.  Interestingly, our approach does not require this to prove Theorem \ref{thm:edo}; however, we will use these same hypergeometric polynomials below to prove Furter's Theorem \ref{thm:furter}.
\end{remark}

\subsection{Furter's Theorem}\label{sec:furter}
We now turn our attention to providing a new proof of Furter's result that the Polydegree Conjecture holds for $e=2$.  We will again make use of Theorem \ref{thm:partialSpecialization}; however, in contrast to the previous proof, it is more difficult to verify that $\psi (a_{d,2}) \neq 0$.  Thus, we first develop relations among the polynomials $g_{k,2}$ and $\alpha _{i,j,e}$.  To do so, we define polynomials $P_{k,n}(z) \in \IQ[z]$ (for all positive integers $k,n$) as
$$P_{k,n}(z) = \sum _{b=0} ^{\lfloor \frac{n}{2} \rfloor } \binom{ k+n-b}{ k, b,n-2b} z^b.$$

These are actually re-indexed versions of polynomials introduced in \cite{SFPA2}, where it was observed that they are actually hypergeometric polynomials, namely (in our indexing)
\begin{equation} \label{eq:hypergeometric}
P_{k,n}(z)=\binom{k+n}{k} \;{}_2F_1\left( -\frac{n}{2}, -\frac{n}{2}+\frac{1}{2}; -k-n; -4z\right).  
\end{equation}

\begin{lemma} \label{lem:P}
Let $i,j,k \in \IN$ with $j>i$. Then
\begin{enumerate}
\item $\alpha _{i,j,2} = (-x_1)^{j-i} P_{j+2,j-i}\left(-\frac{x_2}{x_1^2} \right)$,
\item $g_{k,2} =  \frac{ (-x_1)^k}{k+1} P_{k,k}\left( -\frac{x_2}{x_1^2}\right)$,
\item $a_{k,2} =(-x_1)^{2k-2}\left( P_{k+1,k-1}\left(-\frac{x_2}{x_1^2} \right)P_{k+2,k-1}\left(-\frac{x_2}{x_1^2} \right)-P_{k+1,k-2}\left(-\frac{x_2}{x_1^2} \right)P_{k+2,k}\left(-\frac{x_2}{x_1^2} \right) \right).$
\end{enumerate}
\end{lemma}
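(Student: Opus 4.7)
The plan is to prove part (a) by direct substitution into the definition of $\alpha_{i,j,e}$, and then derive parts (b) and (c) as essentially immediate corollaries.

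For part (a), fix $n = j-i$ and note that when $e=2$, the tuples $\mathbf{a} = (a_1,a_2)$ with $\mathbf{a}\cdot\IN = n$ are exactly parameterized by $b := a_2 \in \{0,1,\dots,\lfloor n/2 \rfloor\}$ with $a_1 = n-2b$, so that $|\mathbf{a}| = n-b$. Plugging this into the definition \eqref{eq:alpha} yields
\begin{equation*}
\alpha_{i,j,2} = \sum_{b=0}^{\lfloor n/2\rfloor} (-1)^{n-b} \binom{n-b+j+2}{n-2b,\,b,\,j+2} x_1^{n-2b} x_2^b.
\end{equation*}
Pulling out the factor $(-x_1)^n$, the remaining terms are $\binom{n-b+j+2}{n-2b,\,b,\,j+2}(-x_2/x_1^2)^b$, and the summation matches the defining sum for $P_{j+2,n}(-x_2/x_1^2)$ (using the symmetry of the multinomial coefficient). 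The only thing to keep an eye on is the sign bookkeeping: $(-1)^{n-b}$ combines with the $(-1)^b$ coming from the $b$-th power of $-x_2/x_1^2$ to give $(-1)^n$, which is absorbed into $(-x_1)^n$. This gives part (a).

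For part (b), I would invoke the identity $g_{k,e} = \frac{1}{k+1}\alpha_{-2,k-2,e}$ stated in the paper just before Theorem~\ref{thm:ICDC}, applied with $e=2$, so that $j-i = k$ and $j+2 = k$. Part (a) then directly yields $g_{k,2} = \frac{(-x_1)^k}{k+1} P_{k,k}(-x_2/x_1^2)$.

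For part (c), expand the $2\times 2$ determinant:
\begin{equation*}
a_{k,2} = \alpha_{0,k-1,2}\,\alpha_{1,k,2} - \alpha_{0,k,2}\,\alpha_{1,k-1,2}.
\end{equation*}
Apply part (a) to each factor: $\alpha_{0,k-1,2}$ contributes $(-x_1)^{k-1}P_{k+1,k-1}$, $\alpha_{1,k,2}$ contributes $(-x_1)^{k-1}P_{k+2,k-1}$, $\alpha_{0,k,2}$ contributes $(-x_1)^{k}P_{k+2,k}$, and $\alpha_{1,k-1,2}$ contributes $(-x_1)^{k-2}P_{k+1,k-2}$ (all $P$'s evaluated at $-x_2/x_1^2$). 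Both products contribute the same factor $(-x_1)^{2k-2}$, and collecting gives the stated formula.

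There is really no serious obstacle here; the whole lemma is a mechanical unpacking of definitions. The only thing requiring care is the sign and index tracking in the substitution of (a); once that is set up correctly, (b) and (c) are one-line consequences.
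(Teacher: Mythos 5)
Your proof is correct and follows essentially the same route as the paper's: substitute $b=a_2$ to parameterize the sum defining $\alpha_{i,j,2}$, factor out $(-x_1)^{j-i}$, and match the remaining sum to $P_{j+2,j-i}$, with (b) and (c) following as immediate consequences. You have also silently fixed a small typo in the paper's proof, which writes $g_{k,2}=\frac{1}{k+1}\alpha_{k-2,-2,2}$ when it should be $\alpha_{-2,k-2,2}$ as you use; the only thing worth noting is that this application of part (a) takes $i=-2\notin\IN$, which is fine since the computation in (a) depends only on $j-i>0$, but it slightly exceeds the literal hypotheses as stated.
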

\begin{proof}
This is a straightforward computation; first, for $\alpha _{i,j,2}$ we observe 
\begin{align*}
\alpha _{i,j,2} &= \sum _{a_1+2a_2 = j-i} (-1)^{|{\bf a}|} \binom{2a_1+3a_2+i+2}{a_1, a_2,j+2} x_1 ^{a_1}\  x_2 ^{a_2} \\
&= \sum _{b=0} ^{\lfloor \frac{j-i}{2} \rfloor} (-1) ^{j-i-b} \binom{2j-i+2-b}{ b, j+2,j-i-2b} x_1 ^{j-i-2b} x_2 ^b.
\end{align*}
Factoring $(-x_1)^{j-i}$ out of the last sum, we find the claimed formula for $\alpha _{i,j,2}$.
 
The second claim then follows from the observation that $g_{k,2} = \frac{1}{k+1}\alpha_{k-2,-2,2}$.
The third claim follows immediately from the definition $a_{d,2}=\alpha _{0,d-1,2} \alpha _{1,d,2}-\alpha _{1,d-1,2} \alpha _{0,d,2} $ and the first claim.
\end{proof}

Now, we state a theorem about the polynomials $P_{k,n}$ that will immediately imply Furter's result (restated below in Theorem \ref{thm:furter}).   
\begin{theorem}\label{thm:lambda}
Let $d \geq 2$.  Then there exists $\lambda \in \IC^*$ such that
\begin{enumerate}
\item $P_{d,d}(\lambda) = 0$,
\item $P_{d+1,d+1}(\lambda) \neq 0$,
\item $P_{d+1,d-1}(\lambda)P_{d+2,d-1}(\lambda)-P_{d+1,d-2}(\lambda)P_{d+2,d}(\lambda) \neq 0$.
\end{enumerate}
\end{theorem}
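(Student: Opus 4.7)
The plan is to exhibit $\lambda$ as a nonzero root of $P_{d,d}$ and to argue that, among the (finitely many) such roots, at least one avoids the zero sets of both $P_{d+1,d+1}$ and of the determinantal polynomial $Q(z):=P_{d+1,d-1}(z)P_{d+2,d-1}(z)-P_{d+1,d-2}(z)P_{d+2,d}(z)$. Since $\deg P_{d,d} = \lfloor d/2 \rfloor \geq 1$ for $d\geq 2$ and $P_{d,d}(0)=\binom{2d}{d}\neq 0$, the polynomial $P_{d,d}$ certainly admits a nonzero complex root; what remains is to rule out the possibility that every such root is a common zero of $P_{d+1,d+1}\cdot Q$.

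The crux of the proof therefore reduces to showing that $P_{d,d}$ divides neither $P_{d+1,d+1}$ nor $Q$ in $\IC[z]$. To establish this, I would exploit the hypergeometric representation \eqref{eq:hypergeometric} together with Gauss's classical three-term contiguous relations for ${}_2F_1$. Each such relation, applied to the parameters in $P_{k,n}(z)=\binom{k+n}{k}\,{}_2F_1(-n/2,-n/2+1/2;-k-n;-4z)$, yields an explicit polynomial identity among a handful of neighbouring $P_{k',n'}$. By chaining a small number of these, one can express $P_{d+1,d+1}$, and likewise each of the four factors appearing in $Q$, as an $\IQ[z]$-linear combination of $P_{d,d}$ together with a single companion polynomial of controlled degree (e.g. $P_{d,d-1}$). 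Reducing modulo $P_{d,d}$ then converts conditions (2) and (3) into concrete assertions that certain explicit residue polynomials $R_2,R_3 \in \IQ[z]/(P_{d,d})$ are nonzero.

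Verifying the non-vanishing of $R_2$ and $R_3$ is where I expect the main difficulty. For (2) the expected strategy is a direct leading-coefficient computation: once the contiguous relations are composed correctly, the highest-degree term of $R_2$ should be a ratio of factorials in $d$ that is manifestly nonzero. Condition (3) is subtler because $Q$ is a bilinear Wronskian-type combination, so when each factor is reduced modulo $P_{d,d}$ and the products expanded, there is genuine risk of unexpected cancellation. A natural workaround is to recognise $Q$, up to a nonvanishing prefactor, as a Casorati-type determinant attached to the hypergeometric difference equation satisfied by the $P_{k,n}$ in one of their indices; such Casoratians are classically known to be nonzero wherever the associated recurrence is regular, which would close the argument.

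The main obstacle, then, is not the bookkeeping involved in assembling the contiguous relations (routine if tedious) but the precise non-vanishing of the bilinear residue arising from condition (3); the structural identification of $Q$ with a Casorati determinant — or, failing that, a careful cancellation-free leading-term analysis — is the step that I expect to require the most ingenuity.
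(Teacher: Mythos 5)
Your plan has the right flavor (contiguous relations for the hypergeometrics $P_{k,n}$), but it contains a genuine logical gap and misses the structural identity that makes the paper's argument close.

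\textbf{The logical gap.} You reduce the problem to showing that $P_{d,d}$ divides neither $P_{d+1,d+1}$ nor $Q := P_{d+1,d-1}P_{d+2,d-1}-P_{d+1,d-2}P_{d+2,d}$, and equivalently to showing that two residue classes $R_2,R_3$ in $\IQ[z]/(P_{d,d})$ are nonzero. That is not sufficient. Even granting that $P_{d,d}$ has only simple roots, the statement ``$P_{d,d} \nmid P_{d+1,d+1}$ and $P_{d,d}\nmid Q$'' (i.e.\ $R_2\neq 0$ and $R_3\neq 0$ in the quotient ring) only says that \emph{some} root of $P_{d,d}$ avoids the zeros of $P_{d+1,d+1}$ and that \emph{some (possibly different)} root avoids the zeros of $Q$; it does not produce a single $\lambda$ avoiding both. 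You would need either to show $P_{d,d}\nmid P_{d+1,d+1}\cdot Q$ after establishing simplicity of the roots of $P_{d,d}$, or (better) a statement about \emph{every} root, or irreducibility of $P_{d,d}$ over $\IQ$ so that the quotient is a field. None of these is addressed.

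\textbf{The missing structural identity and comparison with the paper.} The paper's key move is to express $Q$ itself as an explicit \emph{quadratic form} in $P_{d,d}$ and $P_{d+1,d+1}$ with rational-function coefficients; modulo $P_{d,d}$ (and away from the poles $z=0,-\tfrac{1}{3},-\tfrac{1}{4}$) this collapses to a nonzero multiple of $P_{d+1,d+1}^2$. Consequently condition (3) becomes automatic once (1), (2) and $\lambda\notin\{0,-\tfrac{1}{3},-\tfrac{1}{4}\}$ hold; there is no separate Casoratian analysis and no delicate cancellation question. The paper then proves the genuinely per-root statement that $P_{d,d}$ and $P_{d+1,d+1}$ have \emph{no common root at all} (so every nonzero root of $P_{d,d}$ works), via a first-derivative three-term relation plus a Sturm-type second-order relation showing roots of $P_{d,d}$ are simple, together with closed-form evaluations at $z=-\tfrac{1}{3}$ and $z=-\tfrac{1}{4}$. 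Your Casoratian idea for $Q$ is speculative (you flag it yourself as the step requiring ingenuity), and even if executed it would still leave the logical gap above. To repair your outline, replace the "divides neither" reduction by the per-root coprimality of $P_{d,d}$ and $P_{d+1,d+1}$, and replace the Casoratian hope by the quadratic identity for $Q$.
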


\begin{theorem}[Furter] \label{thm:furter} If $d \geq 2$, then $\mathcal{G}_{(d+2)} \subset \overline{\mathcal{G}_{(d,3)}}$.
\end{theorem}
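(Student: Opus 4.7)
The plan is to deduce Furter's Theorem as an immediate consequence of Theorem~\ref{thm:lambda} combined with Theorem~\ref{thm:partialSpecializationSomeC}, since for $e = 2$ all three hypotheses of Theorem~\ref{thm:partialSpecializationSomeC} depend essentially on the evaluation of certain hypergeometric polynomials at a single complex point.

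First, I would specialize to $e = 2$ and observe that the conditions in Theorem~\ref{thm:partialSpecializationSomeC} reduce to finding a specialization homomorphism $\psi: \IC[x_1,x_2] \to \IC$ with $\psi(g_{d,2}) = 0$, $\psi(g_{d+1,2}) \neq 0$, and $\psi(a_{d,2}) \neq 0$ (since $e - 2 = 0$ and $e - 1 = 1$). I would then normalize by choosing $\psi(x_1) = 1$ and $\psi(x_2) = -\lambda$ for some parameter $\lambda \in \IC$, which is permissible because the formulas in Lemma~\ref{lem:P} show that all three polynomials evaluated at $\psi$ are (up to nonzero scalar) polynomials in $\lambda = -x_2/x_1^2$.

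Next, invoking Lemma~\ref{lem:P} directly, the three conditions on $\psi$ translate respectively into
\begin{align*}
P_{d,d}(\lambda) &= 0, & P_{d+1,d+1}(\lambda) &\neq 0, \\
P_{d+1,d-1}(\lambda)P_{d+2,d-1}(\lambda) &- P_{d+1,d-2}(\lambda)P_{d+2,d}(\lambda) \neq 0,
\end{align*}
which are exactly the conclusions asserted by Theorem~\ref{thm:lambda}. Thus applying Theorem~\ref{thm:lambda} produces a suitable $\lambda$, defining $\psi$ as above yields the required specialization homomorphism, and Theorem~\ref{thm:partialSpecializationSomeC} then delivers $\G_{(d+2)} \subset \overline{\G_{(d,3)}}$.

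In this scheme, the deduction of Furter's Theorem itself is genuinely routine once Theorem~\ref{thm:lambda} is in hand; the real obstacle has been displaced into Theorem~\ref{thm:lambda}, whose proof presumably requires a nontrivial argument about the distribution of roots of the hypergeometric polynomials $P_{d,d}$ and the nonvanishing of a $2 \times 2$ determinant of $P_{k,n}$-values at those roots. For the present proof, however, I would simply cite Theorem~\ref{thm:lambda} and wrap up in a few lines.
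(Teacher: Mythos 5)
Your proposal is correct and takes essentially the same route as the paper: both invoke Lemma~\ref{lem:P} to translate the hypotheses on $g_{d,2}$, $g_{d+1,2}$, $a_{d,2}$ into evaluations of the hypergeometric polynomials $P_{k,n}$ at a single parameter, and both cite Theorem~\ref{thm:lambda} as the source of a suitable $\lambda$. The only (minor, favorable) difference is that you apply Theorem~\ref{thm:partialSpecializationSomeC}, which only requires $\psi(g_{d+e-1,e})\neq 0$, whereas the paper invokes the stronger Theorem~\ref{thm:partialSpecialization} and therefore must add a second scaling step (choosing $\psi_2(x_1)=\mu$) to force $\psi(g_{d+e-1,e})$ to equal a prescribed $c_0$; your version sidesteps that homogeneity argument, making the deduction a bit more streamlined while, as you note, concentrating all the real content in Theorem~\ref{thm:lambda}.
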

\begin{proof}
We construct the specialization homomorphism $\psi _0: \IC[x_1,x_2] \rightarrow \IC$ in two steps.  First, let $\lambda \in \IC^*$ be as in Theorem \ref{thm:lambda}, and define $\psi _1: \IC[x_1,x_2] \rightarrow \IC[x_1]$ by $\psi_1(x_2)=\lambda x_1^2$.  Then Lemma \ref{lem:P} provides 
\begin{align*}
\psi _1 (g_{d,2}) &= 0, & \psi _1 (g_{d+1,2}) &\neq 0, & \psi _1 (a_{d,2}) &\neq 0.
\end{align*}
Now we observe that $\psi _1(g_{d+1,2})$ is a homogeneous polynomial of degree $d+1$ in a single variable; thus, for any $c_0 \in \IC^*$, there exists $\mu \in \IC^*$ such that, defining $\psi_2: \IC[x_1] \rightarrow \IC$ by $\psi_2(x_1)=\mu$, $\psi_2(\psi_1(g_{d+1,2}))=c_0$.  Similarly, $\psi _1(a_{d,2})$ is also homogeneous, so $\psi_2(\psi_1(a_{d,2})) \in \IC^*$.  The theorem then follows from Theorem \ref{thm:partialSpecialization} by setting $\psi_0=\psi _2 \circ \psi_1$.
\end{proof}

Now we return our attention to Theorem \ref{thm:lambda}.  For the sake of brevity, we will write simply $P_{k,n}=P_{k,n}(z)$; our present goal is to relate the determinant $D := P_{d+1,d-1}P_{d+2,d-1}-P_{d+1,d-2}P_{d+2,d}$ to the polynomials $P_{d,d}$ and $P_{d+1,d+1}$.  Recall from \eqref{eq:hypergeometric} that each $P_{k,n}$ is a hypergeometric polynomial; consequently, any polynomial $P_{d + m, d + n}$, where $m, n$ are integers,
can be expressed as a linear combination in $P_{d, d}$ and $P_{d + 1, d + 1}$,
with coefficients that are rational functions in $d$ and $z$. For instance, we
have
\begin{equation*}
  P_{d + 1, d - 1} = \frac{P_{d + 1, d + 1}}{1 + 3 z} - \frac{(3 d + 2) P_{d,
   d}}{(d + 1) (1 + 3 z)} .
\end{equation*}
Expressing each term of $D$ in terms of $P_{d, d}$ and $P_{d + 1, d + 1}$, it is straightforward
(but somewhat tedious) to obtain
\begin{eqnarray*}
  D & = & \frac{(d + 1) P_{d + 1, d + 1}^2}{2 (d + 2) z (1 + 3 z)} - \frac{3 d
  (3 d + 2) (3 d + 4) z P_{d, d}^2}{2 (d + 1)^2 (d + 2) (1 + 3 z) (1 + 4 z)}\\
  &  & - \frac{(2 + 8 z + d (2 d + 3) (2 + 9 z)) P_{d, d} P_{d + 1, d + 1}}{2
  (d + 1) (d + 2) z (1 + 3 z) (1 + 4 z)} .
\end{eqnarray*}
Thus, the theorem follows as long as there is a $\lambda \in
\mathbb{C}\backslash \{ 0, - 1 / 3, - 1 / 4 \}$ with $P_{d, d} (\lambda) = 0$
and $P_{d + 1, d + 1} (\lambda) \neq 0$. This final assertion is proved in the next lemma.

\begin{lemma}
  Let $\lambda \in \mathbb{C}$ be such that $P_{d, d} (\lambda) = 0$.  Then $P_{d +
  1, d + 1} (\lambda) \neq 0$; moreover, $\lambda \notin \{0,-\frac{1}{3}, -\frac{1}{4}\}$.
\end{lemma}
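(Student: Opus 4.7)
The lemma asserts two things: that $\lambda \notin \{0, -1/3, -1/4\}$, and that $P_{d+1,d+1}(\lambda) \neq 0$. I plan to attack these in turn.

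For the exclusion of the three points, I will compute $P_{d,d}$ at each directly, using the generating function $\sum_n P_{d,n}(z)\, x^n = (1-x-zx^2)^{-(d+1)}$ that is implicit in the preceding analysis. At $z = 0$ this becomes $(1-x)^{-(d+1)}$, giving $P_{d,d}(0) = \binom{2d}{d}$. At $z = -1/4$ one has $1 - x + x^2/4 = (1-x/2)^2$, so $P_{d,d}(-1/4) = [x^d](1-x/2)^{-2(d+1)} = 2^{-d}\binom{3d+1}{d}$. The case $z = -1/3$ is the most interesting: the key algebraic identity is
\[ x\bigl(1 - x + \tfrac{x^2}{3}\bigr) = \tfrac{1}{3}\bigl(1 - (1-x)^3\bigr). \]
Applying Lagrange inversion to the diagonal generating function $F(s) := \sum_d P_{d,d}(-1/3)\, s^d$, which equals $1/f'(x_0(s))$ where $f(x) = x(1-x+x^2/3)$ and $x_0(s)$ is the branch of $f^{-1}$ with $x_0(0) = 0$, and using $f'(x) = (1-x)^2$, I obtain $x_0(s) = 1 - (1-3s)^{1/3}$ and hence $F(s) = (1-3s)^{-2/3}$. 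Extracting coefficients yields the closed form $P_{d,d}(-1/3) = \frac{1}{d!}\prod_{j=0}^{d-1}(3j+2) > 0$.

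For the second assertion, the main tool is the Gauss hypergeometric ODE satisfied by $P_{d,d}$, namely
\[ z(1+4z)\, P_{d,d}''(z) - \bigl(2d + (4d-6)z\bigr)\, P_{d,d}'(z) + d(d-1)\, P_{d,d}(z) = 0, \]
whose singular points are $z = 0$ and $z = -1/4$. By the uniqueness theorem for linear ODEs at regular points, $P_{d,d}$ has only simple zeros in $\IC \setminus \{0, -1/4\}$, so $P_{d,d}'(\lambda) \neq 0$ at our $\lambda$. I then exploit the identity
\[ P_{d+1,d+1}(z) = (1+3z)\, P_{d+1,d-1}(z) + \tfrac{3d+2}{d+1}\, P_{d,d}(z), \]
which is a rearrangement of the formula for $P_{d+1,d-1}$ recorded in the excerpt; evaluating at $\lambda$ and using $\lambda \neq -1/3$ reduces the problem to showing $P_{d+1,d-1}(\lambda) \neq 0$. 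Combining the derivative relation $P_{k,n}'(z) = (k+1)\, P_{k+1,n-2}(z)$ (immediate from differentiating the generating function with respect to $z$) with the basic recurrence $P_{k,n} = P_{k-1,n} + P_{k,n-1} + z P_{k,n-2}$ expresses $P_{d+1,d-1}$ in terms of $P_{d,d}$, $P_{d,d-1}$, and their derivatives, and the simplicity of the roots of $P_{d,d}$ will then be used to deduce the needed non-vanishing.

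The main obstacle is the bookkeeping required to execute this final reduction uniformly in $d$: the auxiliary polynomials $P_{d,d-1}$ and $P_{d,d-1}'$ that appear are not hypergeometric polynomials of the same type as $P_{d,d}$, and independent control over their behavior at $\lambda$ is needed. A clean resolution may require packaging the argument at the level of the algebraic diagonal generating function $F(s,z)$ introduced in the first part, or a simultaneous induction on $d$ tracking several polynomial families in parallel.
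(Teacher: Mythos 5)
Your argument for $\lambda \notin \{0,-\tfrac13,-\tfrac14\}$ is correct, and it is a genuinely different route from the paper's. Rather than specialize the first-order contiguous relation between $p_d$, $p_d'$, $p_{d+1}$ at $z=-\tfrac13$ and $z=-\tfrac14$ and iterate it to get a closed form, you read the closed forms directly off the generating function $\sum_n P_{k,n}(z)x^n=(1-x-zx^2)^{-(k+1)}$. At $z=-\tfrac14$ the quadratic becomes $(1-x/2)^2$ and the coefficient is immediate; at $z=-\tfrac13$ the identity $x(1-x+\tfrac13 x^2)=\tfrac13\bigl(1-(1-x)^3\bigr)$ plus Lagrange inversion give the tidy diagonal generating function $(1-3s)^{-2/3}$. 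Both routes land on $p_d(-\tfrac13)=\tfrac{1}{d!}\prod_{j=0}^{d-1}(3j+2)$ (equivalently $3^d\prod_{k=1}^d(1-\tfrac{1}{3k})$) and $p_d(-\tfrac14)=2^{-d}\binom{3d+1}{d}$. Your derivation is pretty, and roughly comparable in length; either works.

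The second half has the right key fact (the hypergeometric ODE has its only singularities at $0$ and $-\tfrac14$, so roots of $p_d$ away from those points are simple) and the right target: after applying the rearranged contiguous relation $P_{d+1,d+1}=(1+3z)P_{d+1,d-1}+\tfrac{3d+2}{d+1}P_{d,d}$ at $\lambda$, it suffices to show $P_{d+1,d-1}(\lambda)\neq 0$. But your proposed reduction of $P_{d+1,d-1}$, via $P_{k,n}'=(k+1)P_{k+1,n-2}$ and the three-term recurrence $P_{k,n}=P_{k-1,n}+P_{k,n-1}+zP_{k,n-2}$, produces
\[
P_{d+1,d-1}=P_{d,d-1}+\tfrac{1}{d+1}P_{d,d}'+\tfrac{z}{d+1}P_{d,d-1}',
\]
which introduces $P_{d,d-1}$ and $P_{d,d-1}'$, about which you have no control at $\lambda$; as you acknowledge, the argument does not close. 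The gap is real, but the fix requires no new idea. Combining the paper's displayed expression for $P_{d+1,d-1}$ with the relation $2z(3z+1)p_d'=(3dz+4d+2)p_d-(d+1)p_{d+1}$ (this is exactly equation \eqref{eq:relation2}) gives the contiguous identity
\[
P_{d+1,d-1}(z)=\frac{d\,P_{d,d}(z)-2z\,P_{d,d}'(z)}{d+1},
\]
so at $\lambda$ one has $P_{d+1,d-1}(\lambda)=-\tfrac{2\lambda}{d+1}\,P_{d,d}'(\lambda)\neq 0$, since $\lambda\neq 0$ and $P_{d,d}'(\lambda)\neq 0$ by simplicity. The paper's own route is shorter still: it reads $\eqref{eq:relation2}$ at $\lambda$ directly and concludes that a common root of $p_d$ and $p_{d+1}$ outside $\{0,-\tfrac13\}$ must be a root of $p_d'$, contradicting simplicity — so $P_{d+1,d-1}$ never needs to appear.
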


\begin{proof}
  For brevity, we will write $p_d (\lambda) = P_{d, d} (\lambda)$.  We first observe two (easily verified) relations that will prove useful:
  \begin{align}
    2 z (3 z + 1) p_d' (z)& = (3 d z + 4 d + 2) p_d (z) - (d + 1) p_{d + 1}
     (z) \label{eq:relation2} \\
    z (4 z + 1) p_d'' (z) &= 2 ((2 d - 3) z + d) p_d' (z) - d (d - 1) p_d (z) .\label{eq:relation3}
  \end{align}
  
  Clearly,
  $0$ is not a root of $p_d$ for any integer $d \geq 0$. We claim that,
  similarly, $- 1 / 3$ is not a root of any of these polynomials. Indeed, specializing \eqref{eq:relation2} to $z=-\frac{1}{3}$, we can deduce 
  the closed formula
  \begin{equation*}
    p_d (- 1 / 3) = 3^d \prod_{k = 1}^d \left(1 - \frac{1}{3 k} \right).
  \end{equation*}
Similarly, specializing at $z=-\frac{1}{4}$, one finds that
  \begin{equation*}
  p_d (- 1 / 4) = \left(\frac{3}{4} \right)^d \prod_{k = 1}^d \frac{9 k^2 -
   1}{k (2 k + 1)},
\end{equation*}
which implies that $- 1 / 4$ is not a root of $p_d$ for any $d$.

  Now suppose for contradiction that $\lambda$ is a root of both $p_d$ and $p_{d
  + 1}$. By the previous observations, $\lambda \neq 0, - 1 / 3$. It therefore
  follows from \eqref{eq:relation2}
  that $\lambda$ is also a root of $p_d'$. This, however, cannot be the case
  since, as observed in \cite{SFPA2}, the roots of $p_d$ are simple; indeed, the
  simplicity of the roots is an immediate consequence of the relation \eqref{eq:relation3}
  which implies that, if $p_d$ and $p_d'$ were to share a root, then all
  higher derivatives would share that root as well (thus implying that $p_d$
  vanishes identically).
\end{proof}

\subsection{A question of Arzhantsev on root subgroups}\label{sec:Arzhantsev}
In \cite{ZAK}, Arzhantsev, Kuyumzhiyan and Zaidenberg consider the question of when a subgroup of the automorphism group of a toric variety acts infinitely transitively on its open orbit.  In studying this question for the variety $\IC ^2$, it is natural to study the (Demazure) root subgroups $H_d, K_d \subset \mathcal{G}$ for each $d \in \IN$ defined by
\begin{align*}
H_d &= \left\{ (X,Y+aX^d)\ \mid\ a \in \IC\right\}, &
K_d &= \left\{ (X+aY^d,Y)\ \mid\ a \in \IC\right\}.
\end{align*}
In personal communication to the first author \cite{Arzhantsev}, Arzhantzev asked 
\begin{question}\label{q:arzhantsev}
For which $d \geq 2$ is $H_d \subset \overline{\langle H_2,K_1 \rangle}$?
\end{question}

Letting $(2)^d$ represent the length $d$ sequence $(2,\ldots,2)$, we have that $\langle H_2, K_1 \rangle \subset \coprod _{d \geq 0} \G_{(2)^d}$; moreover, $H_d \subset \G_{(d)}$ for all $d \geq 2$.  Noting that by Theorem \ref{thm:edo} we have $\G_{(d)} \subset \overline{\G_{(d-1,2)}}$, inducting on $d$ yields
\begin{theorem} If $d \geq 2$, then  $\G_{(d)} \subset \overline{\G_{(2)^{d-1}}}$.  In particular, $H_d \subset \overline{\G_{(2)^{d-1}}}$.
\end{theorem}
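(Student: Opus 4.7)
The plan is to proceed by induction on $d \geq 2$. The base case $d = 2$ is trivial, reducing to $\G_{(2)} \subset \overline{\G_{(2)}}$. For the inductive step, assume $\G_{(d-1)} \subset \overline{\G_{(2)^{d-2}}}$ for some $d \geq 3$; the goal is to deduce $\G_{(d)} \subset \overline{\G_{(2)^{d-1}}}$.

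The inductive step decomposes into two containments. First, Edo's theorem (in the form where the divisibility hypothesis is satisfied trivially by $e = 1$; equivalently, the $e = 1$ instance of Theorem \ref{thm:partialSpecialization}, where $g_{d-1,1}$ and $a_{d-1,1}$ are nonzero scalar multiples of $x_1^{d-1}$ and $x_1^{d-2}$ respectively, so that any specialization with $\psi(x_1) \neq 0$ verifies the hypotheses) yields $\G_{(d)} \subset \overline{\G_{(d-1,2)}}$. It therefore suffices to establish $\G_{(d-1,2)} \subset \overline{\G_{(2)^{d-1}}}$, since taking closures and chaining inclusions then completes the induction.

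For this second containment, given $\theta \in \G_{(d-1,2)}$, I would use the amalgamated free product structure to factor $\theta = \phi \cdot \psi$ with $\phi \in \G_{(d-1)}$ and $\psi \in \G_{(2)}$. By the inductive hypothesis $\phi \in \overline{\G_{(2)^{d-2}}}$, so there is a family $\phi(t) \in \G_{(2)^{d-2}}$ with $\phi(t) \to \phi$ as $t \to 0$. Then $\phi(t)\psi \to \theta$. Writing the reduced factorizations $\phi(t) = \alpha_0(t) \sigma_1(t) \alpha_1(t) \cdots \sigma_{d-2}(t) \alpha_{d-2}(t)$ and $\psi = \beta_0 \tau \beta_1$, the concatenation remains reduced of length $d - 1$ (and hence $\phi(t)\psi \in \G_{(2)^{d-1}}$) precisely when the joining affine $\alpha_{d-2}(t)\beta_0$ fails to lie in $\B$. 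This codimension-one condition in $\A$ can be forced to hold for generic $t$ by perturbing $\phi(t)$ on the right by an affine family tending to the identity, which alters neither the polydegree of $\phi(t)$ nor its limit $\phi$. Hence $\theta \in \overline{\G_{(2)^{d-1}}}$.

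The hard part is exactly this genericity argument: absent it, the joining affine could lie in $\B$, collapsing the polydegree to a shorter sequence, and containments of shorter polydegrees in $\overline{\G_{(2)^{d-1}}}$ are precisely the content of Conjecture \ref{c3}, which is not known in general. The ``in particular'' statement is then immediate from the observation $H_d \subset \G_{(d)}$ for every $d \geq 2$.
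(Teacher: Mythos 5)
Your proposal is correct and follows the same route the paper intends: the paper's proof consists of the single line ``Noting that by Theorem \ref{thm:edo} we have $\G_{(d)} \subset \overline{\G_{(d-1,2)}}$, inducting on $d$ yields'' the result, and your argument is precisely that induction carried out in full. You correctly identify the one nontrivial point the paper leaves implicit --- that the concatenated word $\phi(t)\psi$ could fail to be reduced at the joining affine, which would collapse the polydegree and land you back in an open case of Conjecture~\ref{c3} --- and you resolve it correctly via a generic right-affine perturbation of $\phi(t)$, which preserves both the polydegree $(2)^{d-2}$ and the limit $\phi$.
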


However, Question \ref{q:arzhantsev} is slightly stronger, as the subgroup $\langle H_2, K_1 \rangle$ does not contain the affine subgroup.  We know of two proofs \cite{SFPA2,Furter15} that $\G_{(d)} \subset \overline{\G_{(d-1,2)}}$, but both of these make use of the affine subgroup.  However, by using Theorem \ref{thm:partialSpecialization} to recover Edo's result in Section \ref{sec:edo}, we have actually proved something slightly stronger, namely that $\mathcal{G}_{(d+1)} \subset \overline {\langle K_d,H_1,K_2 \rangle}$ for all $d \geq 0$.  Since $K_d \subset \mathcal{G}_{(d)}$, we induct downwards on $d$ to obtain
\begin{theorem}
If $d \geq 2$,  then $K_d \subset \overline{\langle K_2,H_1 \rangle}$ and $H_d \subset \overline{\langle H_2,K_1 \rangle}$.
\end{theorem}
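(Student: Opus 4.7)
The plan is to handle the two statements by a common symmetry argument and an upward induction on $d$. Since the involution of $\mathcal{G}$ swapping the coordinates $X$ and $Y$ interchanges the subgroups $K_j \leftrightarrow H_j$, it carries $\overline{\langle K_2, H_1 \rangle}$ to $\overline{\langle H_2, K_1 \rangle}$. Thus it suffices to establish $K_d \subset \overline{\langle K_2, H_1 \rangle}$ for all $d \geq 2$; the companion statement for $H_d$ will follow automatically.

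The base case $d = 2$ is immediate since $K_2 \subset \langle K_2, H_1 \rangle$. For the inductive step, I will use the strengthened form of Edo's theorem that the paper extracts from Theorem \ref{thm:partialSpecialization} in Section \ref{sec:edo}, namely $\mathcal{G}_{(d+1)} \subset \overline{\langle K_d, H_1, K_2 \rangle}$ for every $d \geq 0$ (obtained by checking that $\psi(u_{0,e-1}) \neq 0$ in Edo's specialization with $e = 1$). Assuming $K_{d-1} \subset \overline{\langle K_2, H_1 \rangle}$ as the inductive hypothesis, apply that strengthened Edo statement with index $d-1$ to obtain $\mathcal{G}_{(d)} \subset \overline{\langle K_{d-1}, H_1, K_2 \rangle}$. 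Since $K_d \subset \mathcal{G}_{(d)}$, this yields $K_d \subset \overline{\langle K_{d-1}, H_1, K_2 \rangle}$.

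The final step is to collapse the right-hand side into $\overline{\langle K_2, H_1 \rangle}$. Here I will invoke the general fact that the closure of a subgroup of an ind-group is again a subgroup; in particular $\overline{\langle K_2, H_1 \rangle}$ is a closed subgroup of $\mathcal{G}$ containing $K_2$, $H_1$, and (by the inductive hypothesis) $K_{d-1}$. Hence it contains the subgroup they generate, and being closed it also contains its closure, giving $\overline{\langle K_{d-1}, H_1, K_2 \rangle} \subset \overline{\langle K_2, H_1 \rangle}$. Combining this with the previous inclusion completes the induction.

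I do not anticipate a serious obstacle: the heavy lifting has been done by the refined form of Edo's result in Section \ref{sec:edo}, which is precisely what is needed to stay inside the subgroup generated by $K_2$ and $H_1$ rather than the full group $\mathcal{G}$. The only subtlety is the appeal to the fact that the Zariski closure of a subgroup of $\mathcal{G}$ is a subgroup; this is standard for ind-groups and is used implicitly elsewhere in the paper, so it should require at most a brief citation rather than a separate argument.
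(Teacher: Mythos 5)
Your proposal is correct and takes essentially the same approach as the paper: the paper likewise invokes the strengthened form of Edo's theorem (the containment $\mathcal{G}_{(d+1)} \subset \overline{\langle K_d,H_1,K_2 \rangle}$ extracted from Theorem~\ref{thm:partialSpecialization} in Section~\ref{sec:edo}) together with $K_d \subset \mathcal{G}_{(d)}$, and performs the same induction on $d$ (phrased there as ``inducting downwards''), relying implicitly on the facts that closures of subgroups are subgroups and that the coordinate-swap conjugation interchanges $K_j$ and $H_j$.
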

This implies (see \cite{ZAK}):
\begin{corollary}
The group $\langle H_2,K_1\rangle$ acts infinitely transitively on its open orbit.
\end{corollary}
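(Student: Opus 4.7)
The proof is a direct appeal to results of \cite{ZAK}; the content of this corollary lies entirely in the preceding theorem, which simply packages that content in the language of transitivity. Specifically, the preceding theorem establishes that $\overline{\langle H_2,K_1\rangle}$ contains every root subgroup $H_d$ with $d \geq 2$, while $K_1 \subset \langle H_2,K_1\rangle$ is immediate. In the framework of \cite{ZAK}, this is exactly the kind of richness in Demazure root subgroups that drives infinite transitivity on the open orbit of a toric surface action.

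My plan is therefore to cite the relevant transitivity criterion from \cite{ZAK} (formulated for subgroups generated by, or containing in their closure, root subgroups of a toric variety) and verify that the collection $\{H_d : d \geq 2\} \cup \{K_1\}$ lying inside $\overline{\langle H_2,K_1\rangle}$ satisfies its hypotheses. The presence of root subgroups in the ``vertical'' direction (the $H_d$) together with one in the ``horizontal'' direction ($K_1$) ensures genuinely two-dimensional movement on $\IC^2$, which is precisely what such criteria require.

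The only substantive concern, and what I expect to be the main technical step, is the passage from the closure $\overline{\langle H_2,K_1\rangle}$ back to the group $\langle H_2,K_1\rangle$ itself, since infinite transitivity is a statement about the latter. I would handle this either by appealing to a closure-stability property built into the \cite{ZAK} criterion, or by directly noting that each $H_d \subset \overline{\langle H_2,K_1\rangle}$ is approximated within $\langle H_2,K_1\rangle$ by one-parameter families of concrete automorphisms (as furnished by the explicit construction in the proof of Theorem~\ref{thm:partialSpecialization}), so that the finite-orbit matching conditions defining infinite transitivity can be realized in $\langle H_2,K_1\rangle$ itself via standard specialization arguments in the ind-variety $\G$.
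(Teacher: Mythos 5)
Your proposal matches the paper's approach: the corollary is obtained as a direct consequence of the preceding theorem by citing \cite{ZAK}, and the paper offers no argument beyond this pointer. Your concern about passing from the closure $\overline{\langle H_2,K_1\rangle}$ back to the group $\langle H_2,K_1\rangle$ itself is precisely the subtlety that the relevant criterion in \cite{ZAK} is formulated to handle, which is why the preceding theorem needs only establish membership of the $H_d$ in the closure rather than in the group.
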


\bibliography{bibliography}{}
\bibliographystyle{plain}

\end{document}